\theoremstyle{plain}
\newtheorem{theorem}{Theorem}
\newtheorem{proposition}{Proposition}
\newtheorem{corollary}[theorem]{Corollary}
\newtheorem{remark}[theorem]{Remark}
\theoremstyle{definition}
\newtheorem{definition}[theorem]{Definition}
\newtheorem{example}[theorem]{Example}
\newcommand{\x}{\mathbf{x}}
\newcommand{\Pib}{\mathbb{\Pi}}
\newcommand{\um}{\mathfrak{u}}
\newcommand{\ab}{\mathfrak{a}}
\newcommand{\Al}{\mathbb{\Sigma}}
\newcommand{\Sho}{\K\la\la \X\ran\ran}
\newcommand{\Se}{\mathcal{S}}
\newcommand{\La}{\mathscr{L}}
\newcommand{\C}{\mathscr{C}}
\newcommand{\Pa}{\mathcal{P}}
\newcommand{\pa}{\mathcal{P}}
\newcommand{\ra}{\rightarrow}
\newcommand{\raa}{\ran\ran}
\newcommand{\laa}{\langle\langle}
\newcommand{\A}{{\mathbb A}}
\newcommand{\N}{{\mathbb N}}
\newcommand{\Z}{{\mathbb Z}}
\newcommand{\X}{{\mathbb X}}
\newcommand{\Y}{{\mathbb Y}}
\newcommand{\Xpm}{{\mathbb X}_{\pm}}
\newcommand{\ord}{\mathrm {ord}}
\newcommand{\sh}{\sigma}
\newcommand{\la}{\langle}
\newcommand{\ran}{\rangle}
\newcommand{\RR}{\mathcal{R}}
\newcommand{\lam}{\lambda}
\newcommand{\om}{\omega}
\newcommand{\msA}{\mathscr{A}}
\newcommand{\spl}{\circ_s}
\newcommand{\kap}{{\boldsymbol \kappa}}
\newcommand{\free}{\mathscr{F}}
\newcommand{\K}{\mathbb{K}}
\newcommand{\Lau}{\mathscr{L}_{\infty}}
\providecommand{\keywords}[1]
{\small	
	\textbf{\textit{Keywords---}} #1}
\providecommand{\subjclass}[1]
{\small	\textbf{\textit{ Subject class---}} #1
}
\title{Abstract, keywords and references template}
\author{Miguel A. M\'endez\\
[-0.8ex]\small Universidad Yachay\\
	[-0.8ex]\small School of Mathematical\\[-0.8ex]\small \& Computational Sciences \\
[-0.8ex]\small Urcuqu\'i, Ecuador.  
\\[-0.8ex]\small\tt
mmendez@yachaytech.edu.ec\\ [-0.8ex]\small\tt mmendezenator@gmail.com}
\title{Shift-plethysm, Hydra continued fractions, and $m$-distinct partitions}
\begin{document}
\maketitle
\begin{abstract}We introduce the hydra continued fractions, as a generalization of the Rogers-Ramanujan continued fractions, and give  a combinatorial interpretation in terms of shift-plethystic trees. We then show it is possible to express them as a quotient of $m$-distinct partition generating functions, and in its dual form as a quotient of the generating functions of compositions with contiguous rises upper bounded by $m-1$. We obtain new generating functions for compositions according to their local minima, for partitions with a prescribed set of rises, and for compositions with  prescribed sets of contiguous differences.
\end{abstract}
%\maketitle

\subjclass{Primary 05A17, 11P84; Secondary 05A15, 05A19}\\
%\subjclass[2010]{05a17, 11P84}
\noindent\keywords{Continued fractions, Integer partitions, Integer compositions, Rogers-Ramanujan identities,   non-commutative series.}\\

\section{Introduction} 
 Let us consider two formal power series $F$ and $G$ depending in an infinite number of commuting variables $\x=(x_1,x_2,x_3,\dots)$, $G$ having zero constant term. The Polya plethysm \cite{Polya1937, joyal1981theorie}, is defined as follows
 	\begin{equation*}
 	 (F\circ_P G)(\x)=F(G(\x),\free_2G(\x),\free_3G(\x),\dots)
 	 \end{equation*}
 	 \noindent where $\free_n$ is the Frobenius operator,  $\free_nG(x_1,x_2,x_3\dots):=G(x_n,x_{2n},x_{3n},\dots)$.
 	 If we consider instead series depending on  variables indexed by natural numbers, $\x=(x_0,x_1,x_2,\dots)$ and the shift operators $\sh^n G(x_0,x_1,x_2,\dots)=G(x_n,x_{n+1},x_{n+2},\dots)$, $n=1,2,3\dots$, instead of Frobenius, we get the operation of shift-plethysm,
 	 \begin{equation*}
 	 (F\spl G)(\x)=F(G(\x),\sh G(\x),\sh^2 G(\x),\dots).
 	 \end{equation*}
 	 The monoidal structure subjacent in each of these two plethysms is apparent. That is, the positive integers with the product in the former, and the additive structure of the natural numbers in the second. A general form of plethysm, with variables in cancellative monoids, was introduced in \cite{Mendezava}. For the shift-plethysm, this general interpretation has proven surprisingly helpful in the enumeration of combinatorial objects, where the additive structure of the natural numbers is the key element. Informally, {\em a numerical structure} will be  one described by using a subset of the natural numbers and its additive properties. Prototypical examples of families of numerical structures are integer compositions and partitions. The umbral map, $x_n\mapsto x^n$ in the Polya's cycle index polynomial of a group, leads to the enumeration of unlabeled combinatorial objects over which the group acts (the number of orbits under the action of the group). In this case, the plethysm corresponds to the cycle index of the wreath product of two groups. In Joyal's theory of species, Polya's plethysm is associated with the operation of substitution. A family of labeled combinatorial structures (species) is assigned a cycle index series in an infinite number of variables. The series (in one variable) enumerating the unlabeled structures  is obtained  by the umbral map applied to the cycle index series. Polya's plethysm is closely related to the operation of substitution of species. Informally, the elements of the substitution of one species  into another, are the combinatorial objects of the former family placed  \emph{inside} the combinatorial objects of the second family of structures. However, the enumeration of the unlabeled structures of the substitution can not be obtained by the  simple substitution of one of the generating series of unlabeled structures into the other. It is necessary to go back to the cycle index series, compute their plethysm and then apply the umbral map \cite{Bergeron1998}.
 	 
 	 Something analogous occurs with shift-plethysm. It enumerates shifted families of numerical structures inside another given family of numerical structures and the umbral map $x_n\mapsto zq^n $, $n=0,1,2,\dots$, sends series in an infinite number of  variables to $q$-series. Considering two particular kinds of series in infinite variables, by shift-plethysm and umbral mapping we recover two classical operations of substitution of $q$-series (see \cite{MendezJuly2020} Section 6.1). Although, part of the original combinatorial meaning implicit in shift-plethysm is obviously lost in this passage from infinite variables to $q$-series. 
 	 
 	 Shift-plethysm is straightforwardly extended to non-commutative series in the alphabet, $$\X=\{X_0, X_1,X_2,\dots\}.$$ But in  fact, our approach in this article goes in the opposite direction. We define first shift-plethysm for noncommutative series, and then project to infinite commutative variables and $q$-series by abeleanization and  umbral map respectively. 
 	  
 	 Noncommutative continued fractions began to be studied as early as 1913 by Wedderburn (see \cite{Wedderburn1913}). In
 	    \cite{Flajolet1980}, Flajolet gave an interesting and nice combinatorial interpretation of the the general Stieltjes-Jacobi continued fractions in non-commuting variables in terms of labeled paths, and solved various enumeration problems. In \cite{Pak1995}, a noncommutative version of the Rogers-Ramanujan continued fraction was given and a general noncommutative Lagrange inversion formula is discussed in connection with the theory of quasideterminants (see  \cite{gel1992theory}, and \cite{Gelfand2005}).
 	    In \cite{Rogers1893} Rogers presented what is now known as the Rogers-Ramanujan continued fraction $\mathcal{R}$, expressed here as a $q$-series, $\mathcal{R}(z)=\mathcal{R}(z,q)$,
 	    $$\mathcal{R}(z)=\frac{z}{1+z\cfrac{q}{1+z\cfrac{q^2}{\ddots}}}$$
 	    and proved that  \begin{equation}\label{eq.RR1}\mathcal{R}(z)=z\frac{\sum_{n=0}^{\infty}\frac{q^{n(n+1)}z^n}{(1-q)(1-q^2)\dots (1-q^n)}}{\sum_{n=0}^{\infty}\frac{ q^{n^2}z^n}{(1-q)(1-q^2)\dots (1-q^n)}}.\end{equation}
 	    We drop the $q^{\frac{1}{5}}$ factor in original Rogers-Ramanujan continued fraction because our main concern here is about its combinatorial meaning. As it was first pointed out by MacMahon and Schur \cite{MacMahonbook, Schur1917}, the numerator and denominator of Eq. (\ref{eq.RR1}) are $q$-generating functions of partitions (in increasing order) with rises lower bounded by $2$ ($2$-distinct partitions). The generating function of the  numerator counting $2$-distinct partitions with it first part at least two. These  generating functions are the main characters in the Rogers-Ramanujan identities (see \cite{sills2017invitation}, and references therein).
 	    In \cite{MendezJuly2020} we consider the following non-commutative generalization of the Rogers-Ramanujan continued fraction, defined by the equation
 	    \begin{equation*}
 	    \mathcal{R}=X_0\frac{1}{1+\sh\mathcal{R}}=X_0\left(\frac{1}{1+X_1}\spl\mathcal{R}\right).
 	    \end{equation*}
 	    The umbral map gives   $\mathcal{R}(z)$ again. 
 	    
 	     As a natural generalization, we define an $m$-headed {\em hydra continued fraction} (hydra fraction, for short)  by the implicit shift-plethystic equation
 	    \begin{equation}\label{eq.hydrafraction}
 	    \mathcal{R}_m=X_0\left(\frac{1}{(1+X_m)(1+X_{m-1})\dots(1+X_1)}\spl \mathcal{R}_m\right).
 	    \end{equation}
 	    The main result in the present article is the introduction of the hydra continued fractions as a generalization of  Rogers-Ramanujan continued fraction, their combinatorial meaning, and their enumerative applications.
 	    
 	    Clearly we have $\mathcal{R}_1=\mathcal{R}$. We prove that the hydra fraction $\mathcal{R}_{m-1}$ can be expressed as a quotient of two generating functions of partitions having rises lower bounded by $m$  ($m$-distinct partitions), see Theorem \ref{theo.quotientpart}. The generating function in the numerator being that of $m$-distinct partitions with least part greater than or equal $m$, thus generalizing Eq. (\ref{eq.RR1}). For example, for $m=2$, the $2$-headed hydra fraction can be expressed as the quotient of the respective generating function of the $3$-distinct partitions, as follows,
 	    \begin{equation*}\mathcal{R}_2(z)=\frac{z}{\left(1+\frac{zq^2}{\left(1+\frac{zq^4}{\iddots\ddots}\right)\left(1+\frac{zq^3}{\iddots\ddots}\right)}\right)\left(1+\frac{zq}{\left(1+\frac{zq^3}{\iddots\ddots}\right)\left(1+\frac{zq^2}{\iddots\ddots}\right)}\right)}=z\frac{\sum_{n=0}^{\infty}\frac{q^{\frac{3n(n+1)}{2}}z^n}{(q;q)_{n}}}{\sum_{n =0}^{\infty}\frac{q^{\frac{n(3n-1)}{2}}z^n}{(q;q)_n}}.\end{equation*}
 	
 	\noindent Here we use the Pochhammer symbol $(a,q)_n=(1-a)(1-aq)\dots(1-aq^{n-1})$.
 	 If we change in Eq. (\ref{eq.hydrafraction}) the sign of each variable $X_n$, $n\geq 1$, we obtain the generating function of the shift-plethystic trees enriched with partitions with parts upper bounded by $m$, $\msA_{\Pib_{m}}=-\mathcal{R}_{m}(-X)$. We prove the dual result for the hydra fraction $\msA_{\Pib_{m-1}}$. It is the quotient of two generating functions of compositions with contiguous differences upper bounded by $m-1$ (see Corollary \ref{cor.quotientcomp}). We consider also the infinitely headed hydra fraction, enumerating shift-plethystic trees enriched with partitions of any size, leading to the enumeration of compositions according with their local minima (see Theorem \ref{theo.mcompositionsfactor}, and Theorem \ref{theo.complocalminima}). We also consider branchless shift-plethystic trees. This construction allows us, by using shift-plethystic inversion, to find a general formula for partitions with rises in a subset of $\N$ (see Theorem \ref{teo.brnchlespart}).  A dual formula, for the enumeration of compositions with contiguous differences in the complementary set in $\Z$ is given in Corollary \ref{cor.compositionsSc}.

 	As a guide to the reader, in Appendix \ref{table.notation} we give a list with the notation for the most relevant series in the article.
\section{Non-commutative series}
Let $\mathbb{A}$ be be an alphabet (a totally ordered set) with at most a countable number of elements (letters). Let $\A^*$ be the free monoid generated by $\A$. It consists of words or finite strings of letters in $\A$, $\om=\om_1 \om_2\dots \om_n$, including de empty string represented as $1$. We denote by $\ell(\om)$ the length of $\om$.
Let $\K$ be a field of characteristic zero. A \emph{noncommutative} formal power series in $\A$ over $\K$ is a function $R:\A^*\rightarrow \K$. We denote $R(\om)$ by $\la R,\om\ran$ and represent $R$ as a formal series

$$R=\sum_{\om\in\mathbb{A}^*}\langle R,\om\rangle\,\om, \;\langle R,\om\rangle\in\mathbb{K},$$

The sum and product of two formal power series $R$ and $S$ are respectively given by 
\begin{eqnarray*}
	R+S&=&\sum_{\om\in\A^*}(\la R,\om\ran+\la S,\om\ran) \om\\R.S&=&\sum_{\om\in \mathbb{A}^*}(\sum_{\om_1\om_2=\om}\langle R,\om_1\rangle \langle S,\om_2\rangle) \omega. \end{eqnarray*}
The algebra of noncommutative formal power series is denoted by $\K\la\la\A\ran\ran$.
There is a notion of convergence on $\K\la\la\A\ran\ran$. We say that $R_1, R_2, R_3,\dots$ converges to $R$ if for all $\om\in \A^*$, $\la R_n,\om\ran= \la R,\om\ran$ for $n$ big enough. A {\em language} (on $\A$) is a subset of $\A^*$. We identify a language $L$ with its generating function, the formal power series
$$L=\sum_{\om\in L} \om.$$
The support of a series $R$  is the language of words where $R$ is different from zero, $$\mathrm{supp}(R)=\{\omega|\la R,\omega\ran\neq 0\}$$
 If $\la R,1\ran=\alpha\neq 0$, then $R$ has an inverse given by (see for example \cite{Stanley1999})
$$R^{-1}=\frac{1}{\alpha}\sum_{n=0}^{\infty}\left(1-\frac{R}{\alpha}\right)^n.$$
Let $B$ be a series having constant term equal to zero, $\la B,1\ran =0$. We denote by $\frac{1}{1-B}$, the inverse of the series $1-B$,
$$\frac{1}{1-B}:=(1-B)^{-1}=\sum_{k=0}^\infty B^k.$$
\section{The algebra $\K\langle\langle \X\rangle\rangle$}
Denote by $\X$ the alphabet $\{X_0, X_1, X_3,\dots\}$. The words in the algebra $\K\langle\langle \X\rangle\rangle$ are indexed by weak compositions. Let $\kap=(k_1,k_2,\dots,k_\ell)$ be an element of $\N^\ell$ (a weak composition). We denote by $X_{\kap}$ the word $X_{k_1}X_{k_2}\dots X_{k_\ell}$, the empty word denoted by $1$. We denote by $|\kap|$ the sum of the parts of $\kappa$, $$|\kap|=k_1+k_2+\dots+k_\ell$$ and by $\ell(\kap)=\ell$ its length.
A formal power series in $\K\langle\langle \X\rangle\rangle$ then has the form
$$ R=\sum_{\kap}\langle R,X_{\kap}\rangle X_{\kap}$$
Let $S$ be a subset of $\N$. We denote by $\Al_S$ the language formed by the single letters $X_k$, $k\in S$,
$\Al_S=\sum_{k\in S}X_k.$

As special cases we denote
 $\Al_m=\sum_{k=m}^{\infty}X_k,\;\mbox{ and }\; \Al_{m}^n=\sum_{k=m}^{n}X_k.$
We shall call $\kap$ a (strong) composition if $k_i\neq 0$, for every $i$.
Observe that the language of strong compositions $\C$ is given by the series
%\begin{equation}\label{eq.compositions}
$\C=\Al_1^*=\frac{1}{1-\Al_1}.$
%\end{equation}
 In what follows the word {\em composition} will mean by defect {\em  strong composition}.
 
We denote by $\Pib_m$ the series of partitions in decreasing order, allowing repetitions and with longest part less than or equal to $m$, 
$$\Pib_{m}^1=\Pib_m:=\prod_{k=m}^1\frac{1}{1-X_k}.$$
The limit $\Pib_{\infty}:=\lim_{n\rightarrow\infty}\Pib_m$ is the series of partitions with parts of any size (in decreasing order),
$$\Pib_{\infty}=\Pib_{\infty}^1:=\prod_{k=\infty}^1\frac{1}{1-X_k}=\lim_{m\rightarrow\infty}\prod_{k=m}^{1}\frac{1}{1-X_k}$$
Series of partitions without repetitions (in increasing order) will be denoted by the symbol $\Pi$, 
$$\Pi^m:=\prod_{k=1}^m(1+X_k),\; \Pi^{\infty}:=\prod_{k=1}^{\infty}(1+X_k)=\lim_{n\rightarrow\infty}\Pi^m.$$
The {\em abeleanization} is the algebra map  $\ab:\K\langle\langle \X\rangle\rangle\rightarrow\K[[x_0,x_1,x_2,\dots]],$ defined by  $\ab(X_k)=x_k$.
The {\em umbral map} $\um\ab(X_k)=\um(x_k)=zq^k$
sends a series $S$ in infinite variables to the $q$-series 
\begin{equation*}
S(z)=S(z,zq,zq^2,zq^3,\dots),
\end{equation*}
which by abuse of language we shall denote with the same symbol $S$.
We have
\begin{equation*}
S(z)=\sum_{k=0}^\infty(\sum_{\ell(\kap)=k}\la S,X_{\kap}\ran q^{|\kap|})z^k.
\end{equation*}  
Then $\um$ is an algebra  map from $\K[[x_0, x_1, x_2,\dots]]$ to $\K[[q]][[z]]=\K[[q,z]].$
 \subsection {Linked languages}
We consider now a special kind of languages obtained from a given set of `links' $B\subseteq W\times W$, where $W$ is some fixed subset of $\N$. Define
$$L_B=\{X_{\kap}|(k_i,k_{i+1})\in B, \mbox{ for every }i=1,2,\dots,\ell(\kap)-1\},$$
and the language $L$ associated to $B$ by
\begin{equation}\label{eq.linked}L=1+\Al_W+L_B.
\end{equation}
We shall call an $L$ of this form a
{\em {\em  linked language}}. 
Define the K-dual $L^!$ to be the language associated with the complement set of links
$$L^!=1+\Al_W+L_{B^c}$$
For linked languages we define a second formal power series, 
$$L^{g}=\sum_{\kap\in L}(-1)^{\ell(\kap)}X_{\kap}.$$ 
We call it the {\em {\em  graded}} generating function of $L$. Eq. (\ref{eq.kdual}) gives us an inversion formula for linked languages.

\begin{proposition}\label{prop.kdual1} The series $L^!$ is given by the inverse of the graded generating function of $L$,
	\begin{equation}\label{eq.kdual}
	L^!=(L^g)^{-1}.
	\end{equation} 
\end{proposition}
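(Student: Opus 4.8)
The plan is to establish the equivalent identity $L^g\cdot L^!=1$ and then conclude by invertibility: since the graded series has constant term $\la L^g,1\ran=1\neq 0$, it admits a two-sided inverse in $\K\la\la\X\ran\ran$, and any right inverse of it must coincide with that inverse; so once $L^g L^!=1$ is proved, $L^!=(L^g)^{-1}$ follows at once.

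First I would expand the product coefficientwise. With $L^g=\sum_{\kap\in L}(-1)^{\ell(\kap)}X_\kap$ and $L^!=\sum_{\mu\in L^!}X_\mu$, the coefficient of a word $X_\nu$, $\nu=(n_1,\dots,n_p)$, is obtained by summing over all factorizations of $\nu$ into a prefix and a suffix:
\begin{equation*}
\la L^g L^!,X_\nu\ran=\sum_{j=0}^{p}(-1)^{j}\,[\,X_{n_1}\cdots X_{n_j}\in L\,]\,[\,X_{n_{j+1}}\cdots X_{n_p}\in L^!\,],
\end{equation*}
where empty products denote the word $1$ and $[\,\cdot\,]$ is the indicator. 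Since every word of $L$ and of $L^!$ has all its letters in $W$, only $\nu$ with all parts in $W$ can contribute, and I restrict to these. Reading off the definitions $L=1+\Al_W+L_B$ and $L^!=1+\Al_W+L_{B^c}$, the prefix lies in $L$ precisely when $(n_i,n_{i+1})\in B$ for all $i<j$, and the suffix lies in $L^!$ precisely when $(n_i,n_{i+1})\in B^c$ for all $i>j$; the junction pair $(n_j,n_{j+1})$ is left free.

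The combinatorial core is then to analyse, for fixed $\nu$, the binary string recording its consecutive pairs: put $b_i=1$ if $(n_i,n_{i+1})\in B$ and $b_i=0$ otherwise, for $1\le i\le p-1$. A split $j$ contributes exactly when $b_i=1$ for every $i<j$ and $b_i=0$ for every $i>j$. I would prove the key lemma that such a split exists only when $b_1\cdots b_{p-1}$ is weakly decreasing, i.e.\ equal to $1^{a}0^{\,p-1-a}$ for some $a$, and that in that case precisely $j=a$ and $j=a+1$ are admissible; any ascent $b_i=0$, $b_{i'}=1$ with $i<i'$ kills every split. The two admissible lengths have opposite parity, so the contributions $(-1)^{a}$ and $(-1)^{a+1}$ cancel, whence $\la L^g L^!,X_\nu\ran=0$ for all $\nu$ of length $p\ge 1$, while the empty word has the single split $j=0$ contributing $1$. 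This yields $L^gL^!=1$.

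I expect the main obstacle to be the admissibility lemma, whose content is that the single free junction pair can straddle at most one $B$-to-$B^c$ boundary, so that the admissible splits always occur as one adjacent, sign-cancelling pair $\{a,a+1\}$. Care is also needed in the degenerate ranges --- the case $p=1$ (no pairs at all, with both $j=0$ and $j=1$ admissible), and prefixes or suffixes of length $\le 1$, where membership in $L$ or $L^!$ is governed by the $1+\Al_W$ part rather than by $L_B$ or $L_{B^c}$. One may repackage the whole cancellation as a fixed-point-free, sign-reversing involution that swaps the two admissible splits of each nonempty $\nu$, which renders the vanishing transparent.
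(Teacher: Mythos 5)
Your argument is correct. Note that the paper itself contains no proof of Proposition \ref{prop.kdual1}: it defers to \cite{MendezJuly2020} and to Theorem 4.1 of Gessel's thesis, of which this is the noncommutative analogue. Your telescoping computation of $\la L^g L^!,X_\nu\ran$ is exactly that standard argument, and all the pieces check out: a split point $j$ is admissible iff $b_i=1$ for $i\le j-1$ and $b_i=0$ for $i\ge j+1$, which forces the string $b_1\cdots b_{p-1}$ to be of the form $1^a0^{p-1-a}$ and then leaves precisely the adjacent, sign-cancelling pair $j\in\{a,a+1\}$ (with the empty word contributing the lone $j=0$); the restriction to words with all letters in $W$ and the length-$\le 1$ degeneracies are handled correctly; and passing from $L^gL^!=1$ to $L^!=(L^g)^{-1}$ via invertibility of a series with constant term $1$ is sound.
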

 Formula (\ref{eq.kdual}) is a non-commutative version of  Theorem 4.1. in Gessel PhD thesis, \cite{Gesselthesis}, where the terminology of linked sets was used for the first time. See \cite{MendezJuly2020} for a proof of Proposition \ref{prop.kdual1}. It  is a particular instance of the inversion formulas relating generating functions of two dual Koszul algebras. For the interested reader, Koszul algebras were introduced by Priddy  in \cite{priddy1970koszul}. A detailed study of Koszul algebras and inversion formulas could be found in  \cite{polishchuk2005quadratic}.

% \begin{example} Partitions. 
 %	$\Pa_1=\prod_{k=1}^{\infty}(1+X_k)$, $\Pa_0=\prod_{k=1}^{\infty}\frac{1}{1-X_k}$, 
 %	$\mathbb{\Pi}_m^1=\prod_{k=m}^1\frac{1}{1-X_k}\;$
 %	$\Pi^m=\prod_{k=1}^m(1+X_k)$
 %\end{example}
 \begin{example}\label{ex.dualpm}{\em  Compositions and $m$-distinct partitions.} We denote by $\Pa_m$ the language of partitions (in increasing order) of the form $\boldsymbol{\lam}=\lam_1\leq\lam_2\leq\dots$, such that $\lam_{i+1}-\lam_{i}\geq m$ for $m$ a non negative integer. Those kind of partitions are called $m$-distinct in \cite{Andrews2004}.  
\begin{equation*}
\Pa_{m}=1+\Al_1+\sum_{\lam_{i+1}-\lam_i\geq m} X_{\boldsymbol{\lam}}.
\end{equation*} 
We denote by $\C^{(m)}$ the language of compositions with upper bounded contiguous differences
\begin{equation*}
\C^{(m)}=1+\Al_1+\sum_{k_{i+1}-k_i\leq m} X_{\kap}.
\end{equation*} 
Clearly we have the duality $\Pa_m^!=\C^{(m-1)}$ and we get,
\begin{equation}\label{eq.partcomdual}
\C^{(m-1)}=(\Pa_m(-X))^{-1}.
\end{equation}
We have that (see \cite{lehmer1946two}, Theorem 1)
\begin{equation}\label{eq.mdistinct}
\Pa_m(z)=\sum_{k=0}^{\infty}\cfrac{q^{m\binom{k}{2}+k}z^k}{(q;q)_k}
\end{equation}
We get
\begin{equation}\label{eq.mm1comp}
\C^{(m-1)}(z)=\left(\sum_{k=0}^{\infty}\cfrac{(-1)^k q^{m\binom{k}{2}+k}z^k}{(q;q)_k}\right)^{-1}
\end{equation}
This identity was proved by Zeilbeger for $m=2$ (see the sequence A003116 in OEIS), in the context of a formula of Lehmer for the determinant of a tridiagonal matrix \cite{Ekhad2019}. 
\end{example}
\begin{example}{\em Carlitz compositions.} Consider the set of links $B=\{(i,i)|i\in\Al_1\}$. The corresponding linked language is that of words using one repeated letter in the alphabet $\Al_1$,  
	\begin{equation*}\mathrm{O}=1+\sum_{i=1}^{\infty}\sum_{k=1}^{\infty}X_{i}^k=1+\sum_{i=1}^{\infty}\frac{X_i}{1-X_i}.
	\end{equation*}
	Its $K$-dual,  associated with the set $B^{c}=\{(i,j)|j\neq i+1\}$, is the language of Carlitz compositions, with words without contiguous  repeated letters (see \cite{ Carlitz1976, Heubach2009}),
	\begin{equation*}\mathrm{C}=(\mathrm{O}(-X))^{-1}=\frac{1}{1+\sum_{i=1}^{\infty}\frac{-X_i}{1-(-X_i)}}=\frac{1}{1-\sum_{i=1}^{\infty}\frac{X_i}{1+X_i}}.
	\end{equation*}
\end{example}

\section{Shift-plethysm}
Let $\kap$ be a composition and $n$ a non negative  integer $n$, and assume that every component of $\kap$ is greater than or equal to $n$. We denote this fact by the inequality $\kap\geq n$ and define $\kap-n$ to be the (in general weak) composition $(k_1-n,k_2-n,\dots,k_{\ell}-n)$.
\begin{definition}
	Define $$\sigma:\K\la\la\X\ran\ran\ra\K\la\la\X\ran\ran$$ by extending the shift   $$\sigma X_i=X_{i+1}, \;i=0,1,2,.\dots$$ as a continuous algebra map. Equivalently, define for a series $R$,
	$$\la \sh R|X_{\kap}\ran:=\begin{cases}\la R|X_{\kap-1}\ran& \mbox{ if $\kap\geq1$}\\0&\mbox{ otherwise.}\end{cases}$$ 
	In general, for a non negative integer $n$,
	$$\la \sh^n R|X_{\kap}\ran=\begin{cases}\la R|X_{\kap-n}\ran& \mbox{ if $\kap\geq  n$}\\0&\mbox{ otherwise.}\end{cases}$$
\end{definition}

\begin{definition} Let $S$ be a series in $\K\laa \X\raa$. 
	We define the  {\em  {\em shift-plethystic substitution}} of $S$ in a word $X_{\kap}=X_{k_1}X_{k_2}X_{k_3}\dots X_{k_\ell}$, as the substitution of the shift $\sigma^{k_i}$ on each of the letters of $X_{\kap}$,
	\begin{equation*}
	X_{\kap}\spl S=(\sigma^{k_1}S)(\sigma^{k_2}S)\dots(\sigma^{k_\ell}S).
	\end{equation*}
	In particular we have that $X_n\spl S=\sh^n S$.
	For a formal power series $R$, and $S$ with  zero constant term, $\la S,1\ran=0$, define the shift-plethysm $R\spl S$ by 
	\begin{equation}\label{eq.shiftplethys}
	R\spl S=\sum_{\kap\in\N^*}\la R,X_{\kap}\rangle X_{\kap}\spl S=\sum_{\kap\in\N^*}\la R,X_{\kap}\rangle  (\sigma^{k_1}S)(\sigma^{k_2}S)\dots(\sigma^{k_\ell}S).
	\end{equation}
\end{definition}
Shift-plethysm is well defined, the series in the right hand side of Eq. (\ref{eq.shiftplethys}) is convergent (see \cite{MendezJuly2020}).
It is associative and non-commutative. The one letter series
 $X_n$ commutes with any other, 
\begin{equation*}
X_n\spl S=S\spl X_n.
\end{equation*} 
By associativity we also have
\begin{equation*}
X_n\spl R\spl S=R\spl X_n\spl S=R\spl S\spl X_n.
\end{equation*} 
\subsection{Shift-plethysm and $q$-series}
The abelianization of the shift-plethysm is obtained by the  substitution of $S(x_n,x_{n+1},x_{n+2},\dots)$,
in the variable $x_n$ of the series $R(x_0,x_{1},x_{2},\dots)$
\begin{equation*}
(R\spl S)(\x)=R(S(x_0,x_1,x_2,\dots),S(x_1,x_2,x_3,\dots),S(x_2,x_3,x_4,\dots),\dots).
\end{equation*} 
Since $\sh S(z)=S(zq,zq,zq^2,\dots)=S(zq)$ we have 
\begin{equation*}
(R\spl S)(z)=R(S(z),S(zq),S(zq^2),\dots).
\end{equation*} 
In \cite{MendezJuly2020} we explain how this formula generalizes two classical notions of $q$-substitution
 (see for example that in \cite{garsia1981qLag}). 
\begin{example}{\em Carlitz Compositions.} 
	\begin{figure}[hbt!]
		\begin{center}\includegraphics[width=95mm]{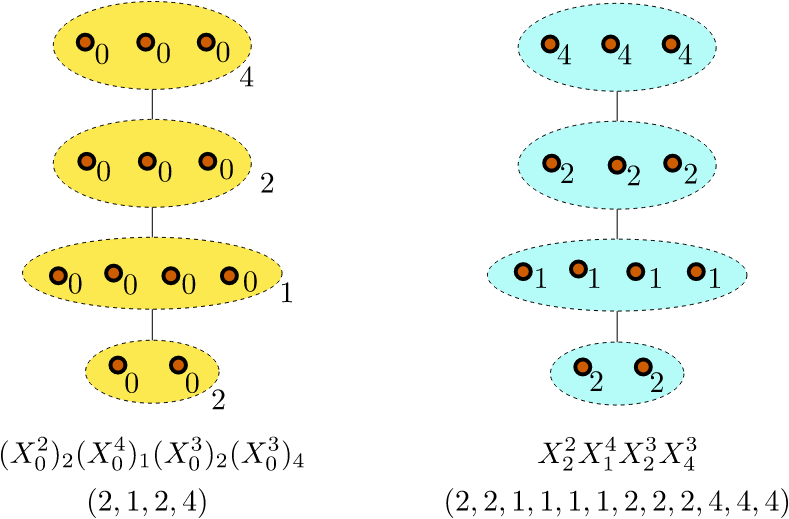}
		\end{center}\caption{A pictorial representation of the identity  $\C=\mathrm{C}\spl \frac{X_0}{1-X_0}$.}\label{fig.mac}
	\end{figure}
	 Given a composition $\boldsymbol{\kap}$, we say that $i$ is a distinction of $ \boldsymbol{\kap}$ if $k_i\neq k_{i+1}$. We can factor $ \boldsymbol{\kap}$ by placing bars after every distinction, and we obtain a composition of the form
	$$ \boldsymbol{\kap}=\mu_1\mu_1\dots\mu_1|\mu_2\mu_2\dots\mu_2|\dots|\mu_k\mu_k\dots\mu_k.$$
	Hence $\boldsymbol{\mu}=(\mu_1,\mu_2,\dots,\mu_k)$ is a Carlitz composition.
	Denote by $\tau_i$ the number of repetitions of $\mu_i$, $i=1,2,\dots,k$. 
	We have that
	$$X_{\boldsymbol{\kap}}=(X_{\mu_1}\spl X_0^{\tau_1})(X_{\mu_2}\spl X_0^{\tau_2})\dots (X_{\mu_k}\spl X_0^{\tau_k}),$$
	which is a word in the language $$X_{\boldsymbol{\mu}}\spl \sum_{k=0}^{\infty}X_0^k=X_{\boldsymbol{\mu}}\spl\frac{X_0}{1-X_0}$$
	Since every composition in $\C$ has a similar factorization for some Carlitz composition $\boldsymbol{\mu}$, we have
	$$\C=\sum_{\boldsymbol{\mu}\in\mathrm{C}}X_{\boldsymbol{\mu}}\spl\frac{X_0}{1-X_0}=\mathrm{C}\spl\frac{X_0}{1-X_0}$$ 
	The shift-plethystic inverse of $\frac{X_0}{1-X_0}$ is $\frac{X_0}{1+X_0}$. Hence,
	 
	$$\mathrm{C}=\C\spl \frac{X_0}{1+X_0}=\frac{1}{1-\Al_1\spl  \frac{X_0}{1+X_0}}=\frac{1}{1-\sum_{i=1}^\infty\frac{X_i}{1+X_i}}.$$
	
	See Fig. \ref{fig.mac},  where the shift-plethystic substitution of words in  the language $\frac{X_0}{1-X_0}$ into the word corresponding to the Carlitz composition  $(2,1,2,4)$ is pictorially represented.\end{example}
\begin{proposition}We have the following identity
\begin{equation}\label{eq.shplethymtoone}	\Pa_m\spl X_0\Pi^{m-1}=\Pi^{\infty}=\Pa_1
\end{equation}
\noindent for arbitrary integer $m\geq 1.$
\end{proposition}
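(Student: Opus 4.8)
The plan is to prove the two equalities separately, the rightmost one being immediate and the left one resting on a coefficient-preserving bijection. First I would note that $\Pi^{\infty}=\Pa_1$ holds essentially by definition: a word of $\Pi^{\infty}=\prod_{k=1}^{\infty}(1+X_k)$ is a strictly increasing repetition-free product $X_{a_1}X_{a_2}\cdots X_{a_r}$ with $1\leq a_1<a_2<\cdots<a_r$, and this is exactly the defining condition $a_{i+1}-a_i\geq 1$ of the language $\Pa_1$; each such word occurs with coefficient $1$ on both sides.

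For the main identity I would set $S:=X_0\Pi^{m-1}$, which has zero constant term, so the shift-plethysm $\Pa_m\spl S$ is defined. The first step is to compute the shifted factors. Since $\sh$ is a continuous algebra map with $\sh X_i=X_{i+1}$, one obtains
\begin{equation*}
\sh^{n}S=X_n\prod_{j=n+1}^{n+m-1}(1+X_j),
\end{equation*}
a sum of words each beginning with the leading letter $X_n$ followed by a strictly increasing choice of letters from the window $\{X_{n+1},\dots,X_{n+m-1}\}$. By the definition of shift-plethysm, a word of $\Pa_m\spl S$ arises by choosing an $m$-distinct partition $\boldsymbol{\lambda}=(\lambda_1,\dots,\lambda_\ell)\in\Pa_m$ (so $\lambda_1\geq 1$ and $\lambda_{i+1}-\lambda_i\geq m$) and concatenating one word from each factor $\sh^{\lambda_i}S$; thus block $i$ consists of $X_{\lambda_i}$ followed by an increasing selection from $\{\lambda_i+1,\dots,\lambda_i+m-1\}$.

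The core of the argument is to show that this construction produces each word of $\Pi^{\infty}$ exactly once. In one direction, every letter of block $i$ has index in $[\lambda_i,\lambda_i+m-1]$; since $\lambda_{i+1}\geq\lambda_i+m$, the successive windows $[\lambda_i,\lambda_i+m-1]$ are increasing and disjoint, so the concatenation of the blocks is strictly increasing, i.e. a word of $\Pi^{\infty}$ with coefficient $1$. Conversely, given a strictly increasing word $a_1<\cdots<a_r$, I would recover the decomposition greedily: set $\lambda_1=a_1$, let the first block be the initial run of letters with index in $[\lambda_1,\lambda_1+m-1]$, take the first letter of index exceeding $\lambda_1+m-1$ (necessarily $\geq\lambda_1+m$) as $\lambda_2$, and iterate.

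The main obstacle — and the only step requiring genuine care — is the \emph{uniqueness} of this decomposition, i.e. that the recovered $\boldsymbol{\lambda}$ is forced. The decisive observation is that no letter with index in $\{\lambda_i+1,\dots,\lambda_i+m-1\}$ can itself be a leading part: if such an index were some $\lambda_{i+1}$, then $\lambda_{i+1}-\lambda_i\leq m-1<m$ would violate the $m$-distinctness of $\boldsymbol{\lambda}$. Hence every letter of a window must be absorbed into its current block, and the leading parts together with the within-block selections are determined by the word alone. This makes the correspondence a bijection; as every word on both sides occurs with coefficient $1$ and the bijection preserves the underlying word, the identity $\Pa_m\spl X_0\Pi^{m-1}=\Pi^{\infty}$ follows. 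I would close by checking the degenerate case $m=1$: there $\Pi^{0}=1$, $S=X_0$, and $X_n\spl X_0=\sh^nX_0=X_n$, so the left side collapses to $\Pa_1$, in agreement with the stated chain of equalities.
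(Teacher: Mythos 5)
Your proof is correct and follows essentially the same route as the paper: both arguments factor a strictly increasing word of $\Pi^{\infty}$ greedily into blocks whose leading letters form an $m$-distinct partition and whose remaining letters lie in the window of width $m-1$ above the leading letter. Your uniqueness step (no letter inside a window can itself be a leading part, by $m$-distinctness) is the same observation the paper's factorization relies on, just spelled out more explicitly.
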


\begin{proof}Observe that the language $X_0\Pi^{m-1}=X_0\prod_{k=1}^{m-1}(1+X_k)$ has as elements the words of the form $X_0X_{\boldsymbol{\lam}}$, where $\lam_{i+1}-\lam_{i}>1$, and $\lam_\ell\leq m-1$, $\ell\leq m-1$ being the length of $\boldsymbol{\lam}$. Recall that the series $\Pa_1=\Pi^{\infty}$ is the language of partitions with distinct parts (in increasing form). Let $X_{\boldsymbol{\tau}}$ be a word in $\Pi^{\infty}$. Define recursively the partition $\boldsymbol{\mu}$ as follows. Define $\mu_1$ to be the first element of $\boldsymbol{\tau}$. After having defined $\mu_i$ for $i<r$, define $\mu_r=\tau_j$ where $j=\mathrm{min}\{s|\tau_s-\mu_{r-1}\geq m\}$. Then, placing bars before each $\mu_i,\, i>2$, $\boldsymbol{\tau}$ can be uniquely factored as follows
	$$\boldsymbol{\tau}=\mu_1\boldsymbol{\lam}_1|\mu_2\boldsymbol{\lam}_2|\dots|\mu_k\boldsymbol{\lam}_k.$$  
\noindent where by construction $\mu_{i+1}-\mu_i\geq m$, and each $\boldsymbol{\lam}_i-\mu_i$  is in the language $\Pi^{m-1}$. Hence, every word $X_{\boldsymbol{\tau}}$ can be uniquely factored as follows $$X_{\boldsymbol{\tau}}=(X_{\mu_1}\spl X_0 X_{\boldsymbol{\lam}_1-\mu_1})(X_{\mu_2}\spl X_0 X_{\boldsymbol{\lam}_2-\mu_2})\dots (X_{\mu_1}\spl X_0 X_{\boldsymbol{\lam}_k-\mu_k}).$$
Then we have 
$$\Pa_1=\Pi^{\infty}=\sum_{\boldsymbol{\mu}\in\Pa_m}X_{\boldsymbol{\mu}}\spl X_0\Pi^{m-1}=\Pa_m\spl X_0\Pi^{m-1}.$$
\end{proof}

\subsection{Implicit Equations}
\begin{definition}{\em Implicit shift-plethystic equation}. Let $\mathbb{Y}=\{Y_0,Y_1,Y_2,\dots\}$ be an alphabet disjoint with $\X$. Consider the implicit equation
\begin{equation}\label{eq.implicit}
Y_0=F(X;Y) 
\end{equation}
\noindent where $F(X,Y)$ is a noncommutative formal power series in the alphabet $\X\cup\Y$, satisfying
\begin{enumerate}
	\item $F$ does not have constant term, $\langle F,1\rangle=0.$
	\item The coefficient of $F$ in $Y_0$ is equal to zero, $\langle F,Y_0\rangle =0$.
\end{enumerate}
An equation as above will be called a {shift-plethystic implicit equation}.
\end{definition}
\begin{definition}
We say that a noncommutative series $G=G(X)$, having zero constant term, is a solution of the shift-plethystic equation  (\ref{eq.implicit}) if after the substitution of $Y_k$ by $\sh^kG(X)$  we get the formal power series identity
\begin{equation}\label{eq.implicit2}G(X)=F(X;Y)_{Y_k=\sh^k G(X)}=F(X; G(X),\sh G(X),\sh^2 G(X),\dots).\end{equation}
\end{definition}
By  applying  the shifting $\sh^r$ to both sides of Eq. (\ref{eq.implicit2}) we can see that the implicit equation (\ref{eq.implicit}) is indeed equivalent to the infinite system
\begin{equation*}
Y_r=F(X_r,X_{r+1},X_{r+2},\dots;Y_{r},Y_{r+1},Y_{r+2},\dots),\;r=0,1,2,\dots.
\end{equation*} 
By simplicity, we shall denote by $F(X;G(X))$ the shift-plethystic substitution in the right hand side of Eq. (\ref{eq.implicit2}).
\begin{proposition}\label{prop.implicit}\normalfont{Every shift-plethystic equation has a unique solution $G(X)$.}
\end{proposition}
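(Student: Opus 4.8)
The plan is to realize a solution as the unique fixed point of the substitution operator $\Phi(H):=F(X;H)$ (that is, $F$ with the shift-plethystic substitution $Y_k\mapsto\sh^k H$ carried out), obtained as the limit of the successive approximations $G^{(0)}=0$, $G^{(n+1)}=\Phi(G^{(n)})$ in the topology on $\K\laa\X\raa$ introduced above. By the definition of a solution, $G$ solves \eqref{eq.implicit} precisely when $\la G,1\ran=0$ and $\Phi(G)=G$. All the $G^{(n)}$ will have zero constant term, and each coefficient of $\Phi(H)$ will turn out to depend on only finitely many coefficients of $H$, so the whole argument reduces to understanding how $\Phi$ moves words through a suitable filtration.

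The key is to choose the right filtration. A filtration by word length alone fails, because a one-letter monomial $Y_j$ of $F$ is sent to $\sh^j H$, and $\sh^j$ preserves the length $\ell(\kap)$ of every word, only raising indices. I would instead order the words $X_\kap$ by the pair $\nu(\kap):=(\ell(\kap),|\kap|)\in\N\times\N$ with the lexicographic (hence well-) order $\prec$. The crucial computation is to show that, for a word $X_\kap$ with $\nu(\kap)=(\ell,s)$, one has $\la\Phi(H),X_\kap\ran=\la F,X_\kap\ran$ plus a \emph{finite} $\K$-linear combination of coefficients $\la H,X_{\kap'}\ran$ with $\nu(\kap')\prec(\ell,s)$. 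Indeed, a pure-$\X$ monomial of $F$ contributes to $X_\kap$ only when it equals $X_\kap$, giving the constant $\la F,X_\kap\ran$; a monomial $w$ with $a$ letters from $\X$ and $b\geq 1$ letters from $\Y$ and total length $\ell(w)=a+b\geq 2$ produces words of length $\geq\ell(w)$, so each $H$-factor plugged in has length $\leq\ell-\ell(w)+1<\ell$, dropping $\nu$ in its first coordinate; and the one-letter monomial $Y_j$ contributes $\la F,Y_j\ran\,\la H,X_{\kap-j}\ran$ (when $\kap\geq j$), a coefficient of $H$ on a word of the same length $\ell$ but of sum $s-j\ell<s$ for $j\geq1$. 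This is exactly where hypothesis (2) enters: the diagonal monomial $Y_0$, which would reproduce $\la H,X_\kap\ran$ at the same level $\nu(\kap)$, is killed by $\la F,Y_0\ran=0$. Hypothesis (1) guarantees $\la\Phi(H),1\ran=\la F,1\ran=0$, so $\Phi$ preserves zero constant term and the only admissible constant term of a solution is $0$.

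With this in hand, both claims follow by transfinite induction on $\nu$. For uniqueness, if $G$ and $G'$ are solutions and $\la G,X_{\kap'}\ran=\la G',X_{\kap'}\ran$ for all $\nu(\kap')\prec\nu(\kap)$, then the displayed expression forces $\la G,X_\kap\ran=\la\Phi(G),X_\kap\ran=\la\Phi(G'),X_\kap\ran=\la G',X_\kap\ran$, the base level $\nu=(0,0)$ giving $\la G,1\ran=0=\la G',1\ran$. For existence one uses the same recursion to \emph{define} $\la G,X_\kap\ran$ level by level and checks $\Phi(G)=G$ coefficientwise; equivalently, since the computation of any fixed coefficient involves only finitely many strictly lower coefficients, the recursion tree over the well-order is finitely branching and therefore finite, so the approximations $G^{(n)}$ stabilize on each word and $G=\lim_n G^{(n)}$ is the required fixed point.

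The main obstacle, and the heart of the proof, is precisely the verification in the second paragraph that $\Phi$ is contractive for the two-parameter order $\nu$. The subtlety is that shift-plethysm mixes two distinct degrees: the length, which drops only for the genuinely nonlinear monomials of $F$, and the weight $|\kap|$, which is what $\sh^j$ decreases on the length-preserving linear monomials $Y_j$ with $j\geq1$. One must check that every contributing monomial strictly lowers at least one of these in the lexicographic sense, that conditions (1) and (2) eliminate exactly the monomials that would leave $\nu$ unchanged, and that for each target word only finitely many monomials of $F$ and finitely many factorizations contribute, which is what makes each coefficient a finite sum and the limit well defined.
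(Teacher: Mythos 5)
Your proof is correct, and although it shares the successive-approximation skeleton $G^{(0)}=0$, $G^{(n+1)}=F(X;G^{(n)})$ with the paper, the key stabilization lemma is established by a genuinely different route. The paper's appendix encodes the iterates combinatorially: $G^{(n)}$ is the sum of the words $\Omega(T)$ over trees of height at most $n$ with bi-colored edges recording whether a letter of the enriching monomial lies in $\X$ or in $\Y$, and convergence is proved by bounding the height of any tree contributing to a fixed word $X_{\boldsymbol{\tau}}$ by $\max_i\tau_i+\ell(\boldsymbol{\tau})$; there, hypothesis (2) forces every unary internal vertex (other than a parent of a leaf) to carry a shift of at least $1$, giving at most $\tau_i$ such vertices on a root-to-leaf path, while a leaf-counting argument bounds the branching vertices by $\ell(\boldsymbol{\tau})$, and uniqueness is run along the single-parameter filtration $\max_i\tau_i+\ell(\boldsymbol{\tau})\leq n$. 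You replace this combinatorial model by a direct coefficient computation: the contraction of the substitution operator with respect to the lexicographic order on $(\ell(\kap),|\kap|)$, where the monomials of $F$ of length at least two drop the length, the linear monomials $Y_j$ with $j\geq 1$ drop the weight, and hypothesis (2) kills exactly the one monomial $Y_0$ that would fix both; your finiteness check (contributing monomials have length at most $\ell(\kap)$, their $\X$-indices are pinned by matching and their $\Y$-indices are bounded by $\max_i k_i$) is the precise analogue of the paper's height bound. The two filtrations differ but play the same role, and both arguments invoke hypotheses (1) and (2) at the same spots. What your route does not produce, and the paper's does, is the explicit description of the solution as a sum over enriched trees, which is the combinatorial model the paper reuses for $\msA_M$; conversely, your argument is more economical and delivers existence and uniqueness by one uniform well-founded recursion without introducing the tree encoding.
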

\begin{proof}
	 {\em  Sketch of the proof}. The sketch of the proof is standard, similar to the proof of the existence and unicity of the solution for the implicit equations defining an algebraic language (see for example \cite{Stanley1999, Eilenberg1974}). See also the proof of the implicit function theorem for species \cite{joyal1981theorie}, and its combinatorial interpretation in \cite{Bergeron1998}. However, several technical details of the proof and its combinatorial interpretation are inherent to shift-plethysm. They are given in the Appendix \ref{sec.appendix}. 
	We define

	\begin{equation*}
	\begin{cases}G^{(0)}(X)&=0\\
	G^{(n+1)}(X)&=F(X;Y)_{Y_k=\sh^k G^{(n)}},\mbox{ for $n\geq 0$.}\end{cases}
	\end{equation*}
	We have that $G^{(n)}(X)$ converges, its limit $$G(X):=\lim_{n\rightarrow \infty}G^{(n)}(X)=\lim_{n\rightarrow\infty}F(X;G^{(n-1)})=F(X;G(X)),$$ is a solution of Eq. (\ref{eq.implicit}),  and this solution is unique. 
\end{proof}

\subsection{Shift-plethystic inverse and $\Lau$-series}
From Proposition \ref{prop.implicit} we get a necessary and sufficient condition for a series to have a shift-plethystic inverse. 
\begin{proposition}Let $R$ be a power series without constant term, $\la R,1\ran=0$. Then, $R$ has a shift-plethystic inverse in $\K\la\la \X\ran\ran$ if and only if  $\la R,X_0\ran =\alpha\neq 0$.
	\end{proposition}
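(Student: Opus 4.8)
The plan is to prove both implications by extracting the coefficient of the single letter $X_0$ and, for the harder direction, by recasting the equation $R\spl S=X_0$ as a shift-plethystic implicit equation so that Proposition \ref{prop.implicit} applies. Throughout I would use that $X_0$ is a two-sided identity for $\spl$: indeed $X_0\spl S=\sh^0 S=S$, while $R\spl X_0=\sum_{\kap}\la R,X_\kap\ran(\sh^{k_1}X_0)\dots(\sh^{k_\ell}X_0)=\sum_{\kap}\la R,X_\kap\ran X_{k_1}\dots X_{k_\ell}=R$. Accordingly, a shift-plethystic inverse of $R$ is a series $G$ with $R\spl G=X_0=G\spl R$, and I would write $\alpha=\la R,X_0\ran$.

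For the necessity ($\Rightarrow$), suppose $G$ is an inverse; since $\spl$ is only defined when the inner series has zero constant term, $G$ has $\la G,1\ran=0$. I would extract $\la\,\cdot\,,X_0\ran$ from $R\spl G=X_0$. On the right it is $1$; on the left, in $R\spl G=\sum_{\kap}\la R,X_\kap\ran(\sh^{k_1}G)\dots(\sh^{k_\ell}G)$ every factor $\sh^{k_i}G$ has zero constant term, so a product of $\ell$ factors contributes only words of length $\geq\ell$. Hence only $\ell=1$, i.e.\ $\kap=(k_1)$, can produce the length-one word $X_0$, and $\la\sh^{k_1}G,X_0\ran$ is nonzero only for $k_1=0$, where it equals $\la G,X_0\ran$. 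Thus the $X_0$-coefficient of $R\spl G$ is exactly $\la R,X_0\ran\la G,X_0\ran$, and $\la R,X_0\ran\la G,X_0\ran=1$ forces $\alpha\neq0$.

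For the sufficiency ($\Leftarrow$) with $\alpha\neq0$, separating the $\kap=(0)$ term rewrites $R\spl G=X_0$ as $\alpha G+\sum_{\kap\neq(0)}\la R,X_\kap\ran(\sh^{k_1}G)\dots(\sh^{k_\ell}G)=X_0$. Solving formally for the unknown series as $Y_0$, I would set
\[
F(X;Y)=\frac{1}{\alpha}X_0-\frac{1}{\alpha}\sum_{\kap\neq(0)}\la R,X_\kap\ran\,Y_{k_1}Y_{k_2}\dots Y_{k_\ell},
\]
so that a series $G$ solves $Y_0=F(X;Y)$ (substituting $Y_k=\sh^k G$) precisely when $R\spl G=X_0$. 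The two hypotheses of a shift-plethystic implicit equation then hold: $\la F,1\ran=0$ since every surviving monomial is either $X_0$ or a $Y$-word of length $\geq1$, none with a constant term; and $\la F,Y_0\ran=0$ since a $Y$-word equals the single letter $Y_0$ only for $\kap=(0)$, which is excluded. Proposition \ref{prop.implicit} yields a unique solution $G$, a right inverse of $R$.

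Finally I would upgrade this right inverse to a two-sided one. The $X_0$-term of $G$ arises only from $\frac1\alpha X_0$, since the products of shifted copies of $G$ contribute words of length $\geq1$ but no single $X_0$; hence $\la G,X_0\ran=1/\alpha\neq0$. By the sufficiency already established, $G$ has its own right inverse $H$ with $G\spl H=X_0$, and associativity with the identity gives $R=R\spl X_0=R\spl(G\spl H)=(R\spl G)\spl H=X_0\spl H=H$, so that $G\spl R=G\spl H=X_0$ and $G$ is a genuine two-sided inverse. I expect the main obstacle to be the bookkeeping that verifies conditions (1)--(2) for $F$ together with the computation $\la G,X_0\ran\neq0$, which is exactly what makes the bootstrap from a one-sided to a two-sided inverse go through.
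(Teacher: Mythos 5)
Your proof is correct and follows essentially the same route as the paper: necessity by extracting the $X_0$-coefficient from $R\spl G=X_0$ (only length-one words survive), and sufficiency via the implicit equation $G=\frac{1}{\alpha}(X_0-R^{+}\spl G)$ with $R^{+}=R-\alpha X_0$, which is exactly the paper's choice $F(X;Y)=\frac{1}{\alpha}(X_0-R^{+}(Y))$ fed into Proposition \ref{prop.implicit}. Your closing bootstrap from a right inverse to a two-sided one (via $\la G,X_0\ran=1/\alpha$ and associativity) is a worthwhile addition that the paper leaves implicit.
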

\begin{proof} Let $R$ be a series without constant term. Since $\la R\spl R^{\la -1\ran},X_0\ran=\la R,X_0\ran\la R^{\la -1\ran},X_0\ran$, it is easy to check that $\la R,X_0\ran\neq 0$ is a necessary condition for $R$ to have a shift-plethystic inverse. Assume now that $\la R,X_0\ran=\alpha \neq 0$ and define $\free_R$ by means of the implicit equation
	\begin{equation}\label{eq.implicitinverse}\free_R=\frac{1}{\alpha}(X_0-R^+\spl\free_R).\end{equation}
\noindent where $R^+=R-\alpha X_0$. This implicit equation is as in Proposition \ref{prop.implicit}, with $$F(X,Y)=\frac{1}{\alpha}(X_0-R^+(Y)).$$ Which clearly satisfy the condition  $\la F,Y_0\ran=0$. From Eq. (\ref{eq.implicitinverse}) we obtain
\begin{equation*}
\alpha\free_{R}+R^{+}\spl\free_{R}=(\alpha X_0+R^+)\spl\free_R=R\spl \free_R=X_0,
\end{equation*} 
\noindent which means that $\free_R=R^{\la -1\ran}$.\end{proof}

We have that $\{\sh^j|j\in \N\}$ is a monoid of operators acting on $\Sho$. In order to extend it to the group $\{\sh^j|j\in \Z\}$ and consider negative shifts, we have to extend our alphabet $\X$ to
 $$\Xpm=\{X_0, X_{\pm 1}, X_{\pm2},\dots\}.$$  
In $\K\la\la \Xpm\ran\ran$ we can define the inverses of the shift operator as the algebra map that continuously extends
$$\sh^{-1}X_m=X_{m-1},\; m\in \Z.$$ 
Shift-plethysm is not well defined in $\K\la\la \Xpm\ran\ran$. For example, the computation of coefficients shift-plethysm of the series $\sum_{n\in \Z}X_n$ with itself involves infinite sums of positive coefficients.
However, we can construct an extension of the algebra $\Sho$, where the group of shifts operators acts, and the shift-plethysm is still well defined. 

\begin{definition}
Let $n$ be an integer. We define $\La_n$ as the vector space of shifted formal power series, $\sh^n \Sho$. We denote by  $\Lau$  the sum as vector spaces of all $\La_n$, $n\leq 0$.    

\begin{equation*}\Lau=\sum_{n=-\infty}^{0} \La_n=\sum_{n=-\infty}^{0} \sh^{n}\Sho\end{equation*} 
It is clear that $\Lau$ is an sub-algebra of $\K\la\la \Xpm\ran\ran$.
 \end{definition}
To prove that shift-plethysm is a well defined operation in $\Lau$, we need to define the {\em order} of a formal power series.
\begin{definition}Let $\kap$ be a word in the alphabet $\X_{\pm}$. Define the order of $X_{\kap}$ to be the minimun of the components of $\kap$. 
	\end{definition}

The set of orders of the non-empty words in the support of a series $R$ is bounded below if and only if and only if $R$ is in $\Lau$. We then define for  $R\in\Lau$ a non-constant series
\begin{equation*}
\ord(R)=\mathrm{min}\{\ord(\kap)|\la R,X_\kap\ran\neq 0\}.
\end{equation*} 
The shift-plethysm $R\spl S$, $\la S,1\ran =0$, is naturally extended from Eq. (\ref{eq.shiftplethys}) to series in  $\Lau$, by including words having possible negative components,
\begin{equation*}
R\spl S=\sum_{\kap\in\Z^*}\la R,X_{\kap}\ran X_{\kap}\spl S.
\end{equation*}

\begin{proposition}\label{prop.order}
\normalfont{For $R$ and $S$ series in $\Lau$, we have that
	 if $\la S,1\ran =0$, then $R\spl S$ is well defined.}
\end{proposition}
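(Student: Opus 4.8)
The plan is to verify that shift-plethysm is well defined on $\Lau$ \emph{coefficientwise}: for every fixed word $X_\mu$ with $\mu\in\Z^*$, I must show that only finitely many $\kap\in\Z^*$ in the support of $R$ contribute to $\la R\spl S,X_\mu\ran$. Indeed, each individual term $X_\kap\spl S=(\sh^{k_1}S)\cdots(\sh^{k_\ell}S)$ is a genuine finite product of series, hence a well-defined element of $\K\la\la\Xpm\ran\ran$, since a fixed word admits only finitely many factorizations into $\ell$ blocks; the only thing that can go wrong in the defining sum $\sum_\kap\la R,X_\kap\ran\,(X_\kap\spl S)$ is that infinitely many $\kap$ feed the same coefficient. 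So the whole proposition reduces to a finiteness statement, and I would organize the argument around bounding, in terms of $\mu$, $R$ and $S$, both the length $\ell(\kap)$ and each component $k_i$.

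First I would record the elementary shift identity $\ord(\sh^{n}S)=\ord(S)+n$, valid for $S\in\Lau$ and any $n\in\Z$ because $\sh^{n}X_\nu=X_{\nu+n}$ shifts every component, and in particular the minimal component, by $n$; this is where membership of $S$ in $\Lau$ enters, guaranteeing that $\ord(S)$ is finite. Next, the hypothesis $\la S,1\ran=0$ forces $\la\sh^{k_i}S,1\ran=0$ for every factor, so in expanding $\la(\sh^{k_1}S)\cdots(\sh^{k_\ell}S),X_\mu\ran=\sum_{\mu=\nu^{(1)}\cdots\nu^{(\ell)}}\prod_i\la\sh^{k_i}S,X_{\nu^{(i)}}\ran$, any factorization with an empty block $\nu^{(i)}$ contributes zero. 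Consequently a nonzero contribution requires every block to be nonempty, which immediately yields the length bound $\ell\le\ell(\mu)=:p$.

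The heart of the argument is the bound on the components. For a contributing $\kap$, each block $\nu^{(i)}$ is a nonempty consecutive subword of $\mu$ lying in the support of $\sh^{k_i}S$, so by the shift identity $\ord(\nu^{(i)})\ge\ord(S)+k_i$; since $\ord(\nu^{(i)})$ is one of the components of $\mu$, it is at most $M:=\max_j\mu_j$, giving the upper bound $k_i\le M-\ord(S)$. The lower bound comes for free from $R\in\Lau$: every $\kap\in\mathrm{supp}(R)$ satisfies $k_i\ge\ord(\kap)\ge\ord(R)$. Thus each $k_i$ ranges over the finite set $\{\ord(R),\dots,M-\ord(S)\}$ and $\ell$ over $\{1,\dots,p\}$, so only finitely many $\kap$ can contribute to $\la R\spl S,X_\mu\ran$; the coefficient is a finite $\K$-linear combination and $R\spl S$ is well defined. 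I would close by noting that the same estimates give $\ord(\mu)\ge\ord(S)+\ord(R)$ on the support, so in fact $R\spl S\in\Lau$ with $\ord(R\spl S)\ge\ord(R)+\ord(S)$.

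The step I expect to be the main obstacle is making the two finiteness bounds interact correctly: the zero-constant-term hypothesis is what converts ``the blocks partition $\mu$'' into the length bound, while the boundedness-below of orders (the defining property of $\Lau$, used for \emph{both} $R$ and $S$) is exactly what pins the components $k_i$ into a finite window. I would want to be careful that the identity $\ord(\sh^n S)=\ord(S)+n$ is applied with the correct sign and that the negative shifts, which occur because $\kap$ may have negative though bounded-below components, are legitimate, using that $\Lau$ is closed under all $\sh^n$, $n\in\Z$.
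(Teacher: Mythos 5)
Your proposal is correct and follows essentially the same route as the paper's proof: the zero constant term of $S$ forces every block of the factorization of the target word to be nonempty (giving $\ell(\kap)\leq\ell(\boldsymbol{\tau})$), and the bounded-below orders of $R$ and $S$ pin each component into the finite window $\ord(R)\leq k_i\leq \max_j\tau_j-\ord(S)$. Your write-up just makes explicit the shift identity $\ord(\sh^n S)=\ord(S)+n$ and the closing remark that $\ord(R\spl S)\geq\ord(R)+\ord(S)$, which the paper leaves implicit.
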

\begin{proof}
	We have to prove that for arbitrary $\boldsymbol{\tau}$, the sum
	$$\sum_{\kap\in\Z^*}\la R,X_{\kap}\ran \la X_{\kap}\spl S,X_{\boldsymbol{\tau}}\ran$$
	has only a finite number of nonzero terms. Since
	$$\la X_{\kap}\spl S,X_{\boldsymbol{\tau}}\ran=\sum_{\boldsymbol{\tau}^{(1)}\boldsymbol{\tau}^{(2)}\dots\boldsymbol{\tau}^{(\ell(\kap))}=\boldsymbol{\tau}}\prod_{i=1}^{\ell(\kap)}\la S,X_{\boldsymbol{\tau}^{(i)}-k_i}\ran$$
 If we assume that this expression is different from zero, we should have that
$\ord(R)\leq k_i\leq \mathrm{max}\{\tau_i|i=1,2,\dots,\ell(\boldsymbol{\tau})\}-\ord(S)\,\mbox{ and }\ell(\kap)\leq\ell(\boldsymbol{\tau}),$
which involves only a finite number of $\kap's$.	
\end{proof}
\begin{remark}\normalfont{The set of series in $\Lau$ of the form $\sum_{k=n}^{\infty}c_kX_k$, $n\in\Z$, is closed with with respect to the operation of  shift-plethysm. It is isomorphic the the ordinary Laurent formal power series (with the product) by the umbral map
		\begin{equation*}
		\sum_{k=n}^{\infty}c_kX_k\mapsto \sum_{k=n}^{\infty}c_kq^k
		\end{equation*}  }
\end{remark}
The umbral map, $X_k\mapsto zq^k$, sends a series $R$ into  a $q$-series $R(z)$ with coefficients in the field of Laurent formal power series in the indeterminate $q$. If the order of $R$ is a negative integer $-n\in\Z$, we have that the umbral map sends $R\spl S$ to the $q$-series
\begin{equation*}
(R\spl S)(z)=R(S(zq^{-n}),S(zq^{-n+1}),\dots,S(z),S(zq),S(zq^2,\dots)).
\end{equation*} 
\begin{proposition}\normalfont{Let $R$ be a series in $\Lau$ without constant term. Then, the following conditions are equivalent
	\begin{enumerate}
\item $R$ is invertible with respect to shift-plethysm.
\item $\la R,X_n\ran\neq 0$, $n\in\Z$ being the order of $R$.
\item $\sh^{-n}R$ is invertible in $\Sho$.
	\end{enumerate}}
\end{proposition}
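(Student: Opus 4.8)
The plan is to conjugate everything by the shift operator and reduce to the invertibility criterion in $\Sho$ already established above (the proposition asserting that a series in $\Sho$ without constant term has a shift-plethystic inverse in $\Sho$ if and only if its coefficient of $X_0$ is nonzero). Write $n=\ord(R)$ and set $R'=\sh^{-n}R$, so that $R=\sh^{n}R'$ and $R'$ is a series of order $0$ lying in $\Sho$. The backbone of the whole argument is the pair of identities $(\sh^{k}A)\spl B=A\spl(\sh^{k}B)=\sh^{k}(A\spl B)$, valid for every $k\in\Z$; each follows from $\sh^{k}A=X_{k}\spl A$, the fact that $X_{k}$ commutes with every series under $\spl$, and associativity of $\spl$. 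I would also record the elementary coefficient identity $\la\sh^{-n}R,X_{0}\ran=\la R,X_{n}\ran$, so that condition $(2)$ reads simply $\la R',X_{0}\ran\neq 0$.

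With this in place, $(2)\Leftrightarrow(3)$ is immediate: by the $\Sho$-criterion applied to $R'$, the series $R'$ is invertible in $\Sho$ exactly when $\la R',X_{0}\ran\neq 0$, and $\la R',X_{0}\ran=\la R,X_{n}\ran$. For $(3)\Rightarrow(1)$ I would exhibit the inverse explicitly: if $U=(R')^{\la -1\ran}\in\Sho$, then $T:=\sh^{-n}U$ is a two-sided $\spl$-inverse of $R$ in $\Lau$, since by the shift identities $R\spl T=(\sh^{n}R')\spl(\sh^{-n}U)=\sh^{n}\sh^{-n}(R'\spl U)=R'\spl U=X_{0}$, and symmetrically $T\spl R=X_{0}$. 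One checks that $T\in\Lau$ (it is a shift of $U$, hence in $\sh^{-n}\Sho\subseteq\Lau$ when $n\ge 0$ and in $\Sho\subseteq\Lau$ when $n<0$) and that $\la T,1\ran=0$, so that both plethysms are well defined by Proposition \ref{prop.order}.

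The only substantial direction is $(1)\Rightarrow(2)$, and the key to it is an \emph{order-additivity lemma}: for nonzero $A,B\in\Lau$ with $\la B,1\ran=0$ one has $\ord(A\spl B)=\ord(A)+\ord(B)$. The inequality $\ord(A\spl B)\ge\ord(A)+\ord(B)$ is clear, because each term $X_{\kap}\spl B=(\sh^{k_1}B)\dots(\sh^{k_\ell}B)$ has order $\ord(\kap)+\ord(B)\ge\ord(A)+\ord(B)$. Granting the lemma, if $T$ is a $\spl$-inverse of $R$ then $\ord(T)=-n$, and extracting the coefficient of the length-one word $X_0$ from $R\spl T=X_0$ gives (only single-letter outer words contribute, since each $\sh^{k}T$ has zero constant term) the identity $\sum_{k}\la R,X_{k}\ran\la T,X_{-k}\ran=1$; the order constraints $\la R,X_k\ran=0$ for $k<n$ and $\la T,X_{-k}\ran=0$ for $k>n$ leave only the term $k=n$, whence $\la R,X_{n}\ran\la T,X_{-n}\ran=1$ and in particular $\la R,X_{n}\ran\neq 0$, which is $(2)$. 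This closes the cycle $(1)\Rightarrow(2)\Rightarrow(3)\Rightarrow(1)$.

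The crux is thus the reverse inequality in the order-additivity lemma, namely that the minimal-order part of $A\spl B$ does not cancel. I would first reduce to $\ord(A)=\ord(B)=0$ by replacing $A,B$ with $\sh^{-\ord A}A$ and $\sh^{-\ord B}B$ (again via the shift identities), after which $A,B\in\Sho$ and the claim becomes that $A\spl B$ contains a word of order $0$. To detect this I would pass to a simpler algebra by an order-aware homomorphism that separates the index-$0$ letters from the positive ones — for instance one collapsing $X_0$ to a fresh letter while sending the positive letters into a graded copy of themselves — exploiting that a positive shift $\sh^{k}B$ with $k\ge 1$ contains no letter $X_0$, and then run a length-lexicographic leading-term argument in the resulting free algebra, which has no zero divisors. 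The delicate point, which I expect to be the main obstacle, is precisely that $\ord$ (the minimum index) is not multiplicative on its own, so one must retain \emph{enough} of the index data under the homomorphism to certify that the minimal-order contribution to $A\spl B$ genuinely survives rather than being cancelled; everything else in the proof is formal bookkeeping with the shift identities and with Proposition \ref{prop.order}.
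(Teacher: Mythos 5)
The paper offers no argument to compare against (its proof is ``Easy and left to the reader''), so your proposal must be judged as a standalone proof, and there it has one genuine gap. The parts $(2)\Leftrightarrow(3)$ and $(3)\Rightarrow(1)$ are correct and complete: the reduction $R'=\sh^{-n}R$, the identity $\la\sh^{-n}R,X_0\ran=\la R,X_n\ran$, the shift identities $(\sh^kA)\spl B=\sh^k(A\spl B)=A\spl(\sh^kB)$, and the verification that $T=\sh^{-n}U$ lies in $\Lau$ all check out. The problem is $(1)\Rightarrow(2)$, which you correctly identify as resting on the equality $\ord(A\spl B)=\ord(A)+\ord(B)$, but whose essential half --- that the minimal-order part of $A\spl B$ survives cancellation, i.e.\ $\ord(A\spl B)\leq\ord(A)+\ord(B)$ --- you do not prove; you only sketch a strategy (``order-aware homomorphism'', ``length-lexicographic leading term'') and explicitly flag it as the main obstacle. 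That unproven half is exactly where the entire content of the proposition lives. The easy inequality applied to $R\spl T=X_0$ gives only $\ord(T)\leq -n$, which is the wrong direction for your coefficient extraction; and the extraction of length-one words, which correctly yields that $\sum_k\la R,X_k\ran q^k$ is a unit in the Laurent series field $\K((q))$, only shows $\la R,X_{n'}\ran\neq 0$ for $n'=\min\{k:\la R,X_k\ran\neq0\}$, a number that is a priori strictly larger than $\ord(R)$ when every minimal-order word of $R$ has length at least two (e.g.\ $R=X_0+X_{-1}X_5$ has $\ord(R)=-1$ but $n'=0$). Ruling out precisely this situation for invertible $R$ is the whole point of condition $(2)$, so the argument is circular-adjacent until the lemma is supplied.

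To close the gap you need a genuine leading-term argument. One workable route: for a nonzero $S\in\Lau$ with $\la S,1\ran=0$, among the words of $\mathrm{supp}(S)$ of minimal order choose those of minimal length, and among these the lexicographically least (which exists because the letter indices are bounded below, hence well-ordered); call this the leading word. One then shows that the leading word of $A\spl B$ is obtained by substituting suitably chosen leading data of $B$ into the leading word of $A$, and that no other pair (outer word, splitting) can produce that same target word, so its coefficient is a nonzero product and cannot cancel. The care required is in reconciling ``minimal order'' with ``minimal length'' (they can be achieved by different words of $\mathrm{supp}(B)$, as in $B=X_1+X_0X_0$) and in verifying uniqueness of the factorization of the target word. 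None of this is present in your proposal, so as written the implication $(1)\Rightarrow(2)$ is not established.
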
\begin{proof}
Easy and left to the reader. 
\end{proof}
 \begin{example}\label{ex.inversesigma}
 	The formal power series $\Al_n$, $n\in\Z$, is invertible in $\Lau$
 	$$(\Al_n)^{\langle -1\rangle}=X_{-n}-X_0.$$
 	The series of non-empty compositions $$\C_+=\frac{\Al_1}{1-\Al_1}=\frac{X_0}{1-X_0}\spl\Al_1$$
 	has as inverse
 	$$(\C_+)^{\la -1\ran}=(\Al_1)^{\la -1\ran}\spl \frac{X_0}{1+X_0}=(X_{-1}-X_0)\spl\frac{X_0}{1+X_0}=\frac{X_{-1}}{1+X_{-1}}- \frac{X_0}{1+X_0}.$$	
 \end{example}

\section{Shift-plethystic trees}
In this section we deal with shift-plethystic trees in in the context of noncommutative series. This notion was introduced in \cite{MendezJuly2020}, based in the similar construction formalized by Joyal in \cite{joyal1981theorie} and its plethystic generalization in the commutative framework of colored species \cite{Mendezava}.  

Let us consider rooted plane trees whose vertices are colored with colors in $\N$. We associate to a such  tree $T$ the word $\omega(T)=X_{\kap}$, where $\kap$ is the weak composition obtained by reading the vertices of $T$ in preorder. We denote by $\sh^k T,\;k\in \N$, the plane tree obtained by adding $k$ to the color vertex in $T$. It is clear that $\omega(\sh^kT)=\sh^kX_{\kap}.$

 Let $M$ be a non-commutative series in $\K\langle\langle \X\rangle\rangle$, such that $\langle M,1\rangle=1$. We define the noncommutative series of $M$-enriched trees by the implicit equation  
\begin{equation}\label{eq.spltrees}
\msA_M=X_0(M\spl\msA_M).
\end{equation}
 Proposition \ref{prop.implicit} with $F(X,Y)=X_0M(Y)$ assures the existence of a unique solution $\msA_M$. We also obtain that $\msA_{M}$ has as shift-phethystic inverse
 \begin{equation}\label{ec.invtree}
 \msA_M=X_0 M^{-1}
 \end{equation}
 \begin{figure}[hbt!]
 	\begin{center}\includegraphics[width=130mm]{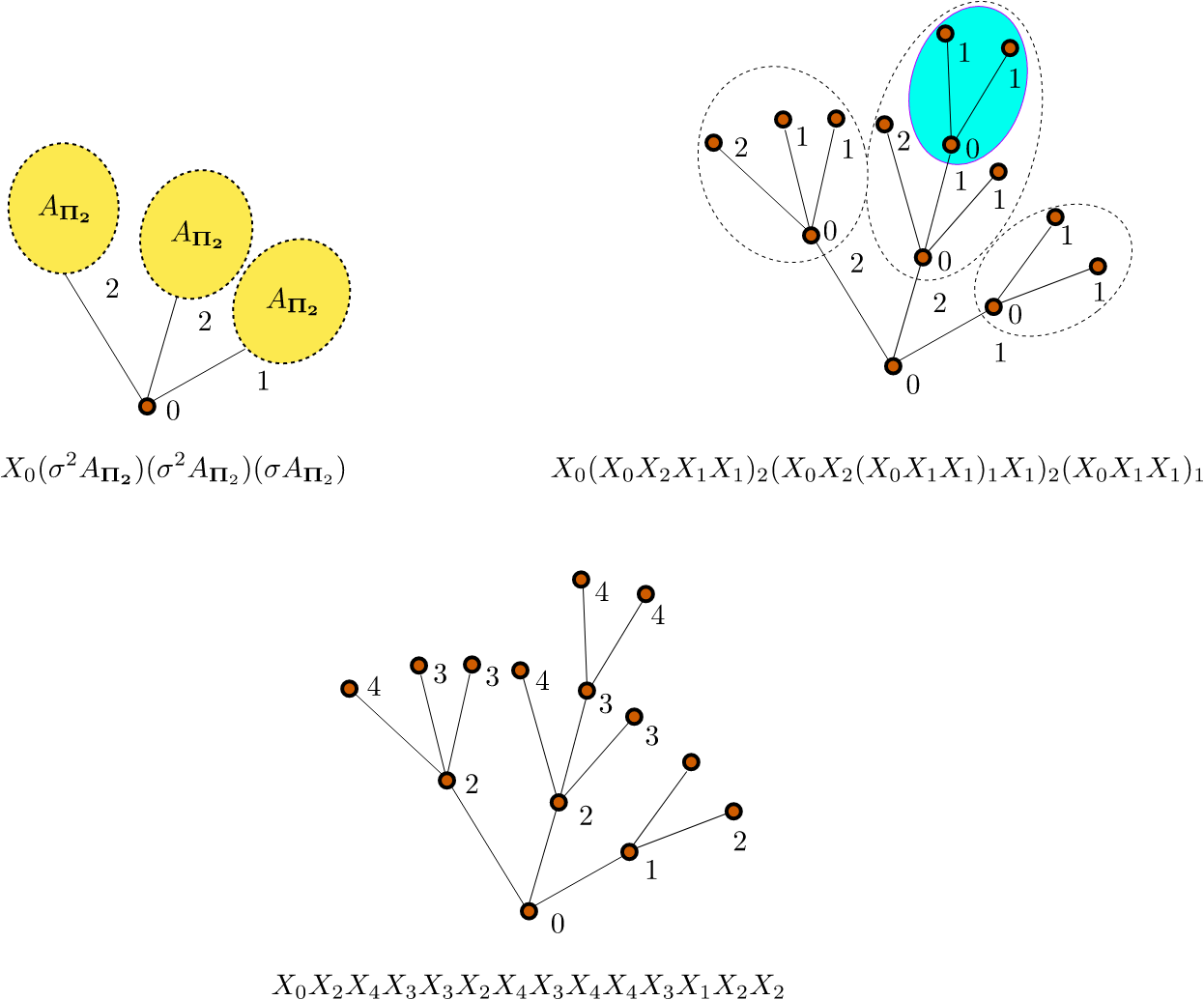}
 	\end{center}\caption{The implicit equation $\msA_{\Pib_{2}}=X_0(\Pib_{2}\spl\msA_{\Pib_{2}})$ defining shift-plethystic trees $\msA_{\Pib_{2}}$ and corresponding associated word.}\label{fig.GeneralizedPlethy}
 \end{figure}
 
 From Eq. (\ref {eq.spltrees}) we obtain the recursion
  \begin{equation}\label{eq.menrichedtree}
 \begin{split}
 \msA_M&=X_0\sum_{\kap}\la M,X_{\kap}\ran(X_{k_1}\spl \msA_{M})(X_{k_2}\spl \msA_{M})\dots (X_{k_{\ell}}\spl \msA_M)\\&=X_0\sum_{\kap}\la M,X_{\kap}\ran (\sh^{k_1} \msA_{M})(\sh^{k_2} \msA_{M})\dots (\sh^{k_{\ell}} \msA_M)
 \end{split}
 \end{equation}
 
 If we assume that $M$ is a language, we get that $\msA_M$ is the series defined by the recursive formula
 \begin{equation}\label{eq.implicitlanguagetree}
\msA_M= \sum_{\kap\in M} X_0(\sh^{k_1} \msA_{M})(\sh^{k_2} \msA_{M})\dots (\sh^{k_{\ell}} \msA_M)
 \end{equation}
 It is the series whose words are associated to a class of colored rooted plane trees that we describe recursively as follows.
  
 \begin{enumerate}
 	\item Its root is colored $0$.
 	\item If the root have $\ell$ children with colors $k_1,k_2,\dots,k_{\ell}$, then $\kap=(k_1,k_2,\dots,k_{\ell})$ is a word in $M$. 
 	\item Denoting by $T_i$ the subtree formed by the descendants of of a vertex whose color $k_i$, then we have that $T_i$ is a  ($k_i$-shifted) shift-plethystic tree; $T_i=\sh^{k_i}T'_i$, $T'_i$ being an $M$-enriched shift-plethystic tree (with root colored zero).
 \end{enumerate}

\begin{remark}\label{rm.spt}\normalfont{It is easy to prove that the trees defined recursively as above are completely described by the properties
	\begin{enumerate} 
		\item Its root is colored zero
		\item For each internal vertex $v$, if its color is $k$ and the word with the colors of its children is $\kap=(k_1,k_2,\dots,k_{\ell})$, then the word $\kap-k=(k_1-k,k_2-k,\dots,k_{\ell}-k)$ is in $M$.
	\end{enumerate}
The shifted trees enumerated by $\sh^n \msA_M$ are similarly described, except that the root has color $n$. 
The general combinatorial description of the series $\msA_M$, when $M$ is not a language, is obtained by weighting the shift-plethystic trees corresponding to the support of $M$. It is done by assigning to each internal vertex $v$ the weight $\la \sh^k M,X_\kap\ran=\la M,X_{\kap-k}\ran$, where $k$ is the color of $v$ and $\kap$ is the word of colors of its children.}
\end{remark}
\begin{example}\label{ex.2partitionenriched} Let $M=\Pib_2=\frac{1}{(1-X_2)(1-X_1)}$ be the language of partitions in weakly decreasing order and using only the parts $1$ and $2$. The shift-plethystic trees corresponding to words of $\msA_{\Pib_2}$ are represented in Fig. \ref{fig.GeneralizedPlethy}. The series $\msA_{\Pib_2}$ is the two-headed hydra continued fraction

 \begin{equation*}\msA_{\Pib_2}=X_0\frac{1}{\left(1-X_2\frac{1}{\left(1-X_4\frac{1}{\iddots\ddots}\right)\left(1-X_3\frac{1}{\iddots\ddots}\right)}\right)\left(1-X_1\frac{1}{\left(1-X_3\frac{1}{\iddots\ddots}\right)\left(1-X_2\frac{1}{\iddots\ddots}\right)}\right)} \end{equation*} 	

We can generalize $\msA_{\Pib_2}$ to shift-plethystic trees enriched with partitions which parts are upper bounded  by $m$, $\msA_{\Pib_{m}}$, leading to the $m$-headed continued fraction. 
\begin{equation}\label{eq.hydramhaedp}\msA_{\Pib_{m}}=X_0\prod_{k=m}^{1}\frac{1}{1-X_k\sh^k\msA_{\Pib_m}}=X_0\frac{1}{\prod_{k_1=m}^{1}1-X_{k_1}\prod_{k_2=m+k_1}^{1+k_1}\frac{1}{1-X_{k_2}\prod_{k_3=m+k_1+k_2}^{1+k_1+k_2}\frac{1}{1-X_{k_3}\cdots}}}.
		\end{equation}
Trees may even be enriched with the language of partitions of any size, $\Pib_{\infty}$, leading to an $\infty$-headed hydra fraction.
\begin{equation*}\msA_{\Pib_{\infty}}=X_0\prod_{k=\infty}^{1}\frac{1}{1-X_k\sh^k\msA_{\Pib_\infty}}=X_0\frac{1}{\prod_{k_1=\infty}^{1}1-X_{k_1}\prod_{k_2=\infty}^{1+k_1}\frac{1}{1-X_{k_2}\prod_{k_3=\infty}^{1+k_1+k_2}\frac{1}{1-X_{k_3}\cdots}}}.
\end{equation*} 
\end{example}
\begin{example}Consider the series $$\Pib_m(-X)=\prod_{k=m}^1\frac{1}{1+X_k}.$$  Let $\RR_{m}$ be the series of $\Pib_{m-1}(-X)$-enriched trees, 
	\begin{equation*}\RR_{m}=\msA_{\Pib_{m}(-X)}=X_0\prod_{k=\infty}^m\frac{1}{1+\sh^k\RR_m}
	\end{equation*} 
	Observe that in this case the enriching series is not a language.   
The series $\RR_m$ is the $m$-headed hydra-continued fraction as in Eq. (\ref{eq.hydramhaedp}) with the obvious change of signs. It generalizes the Rogers-Ramanujan continued fraction. By Eq. (\ref{ec.invtree}), its shift-plethystic inverse is
	$$(\RR_{m})^{\la-1\ran}=X_0(\Pib_{m}(-X))^{-1}=X_0\prod_{k=1}^{m}(1+X_j)=X_0\Pi^m$$
\end{example}
\begin{theorem}\label{theo.quotientpart}The hydra-continued fractions $\RR_{m-1}(X), \RR_{m-1}(\x,z)$ and $\RR_{m-1}(z)$ can be written respectively as the following quotients of  the generating functions of $m$-distinct partitions, 
\begin{eqnarray} &&\label{eq.quotientpart1}\RR_{m-1}=X_0(\sh^{m-1}\Pa_m)(\Pa_m)^{-1}\\
&&\RR_{m-1}(\x,z)=x_0\frac{\Pa_m(zx_{m-1},zx_{m},zx_{m+1},\dots)}{\Pa_m(zx_1,zx_2,zx_3,\dots)}\\
&&\RR_{m-1}(z)=z\frac{\Pa_m(zq^{m-1})}{\Pa_m(z)}.
\end{eqnarray}
\end{theorem}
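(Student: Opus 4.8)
The plan is to verify that the proposed quotient satisfies the implicit shift-plethystic equation defining $\RR_{m-1}$ and then invoke the uniqueness clause of Proposition \ref{prop.implicit}. Recall that $\RR_{m-1}=\msA_{\Pib_{m-1}(-X)}$ is the unique solution of
\begin{equation*}
\RR_{m-1}=X_0\left(\Pib_{m-1}(-X)\spl\RR_{m-1}\right)=X_0\prod_{k=m-1}^{1}\frac{1}{1+\sh^k\RR_{m-1}},
\end{equation*}
which is an admissible shift-plethystic equation: its right-hand side $F=X_0\prod_{k=m-1}^{1}(1+Y_k)^{-1}$ has no constant term (every monomial carries the factor $X_0$) and does not involve $Y_0$. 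Hence it is enough to set $G:=X_0(\sh^{m-1}\Pa_m)(\Pa_m)^{-1}$, note that $\la G,1\ran=0$, and check that $G$ obeys the same equation.

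The bookkeeping device is the family $P_j:=\sh^{j-1}\Pa_m$, the series of $m$-distinct partitions all of whose parts are $\geq j$, so that $P_1=\Pa_m$ and $P_m=\sh^{m-1}\Pa_m$. First I would record the first-part recursion: splitting such a partition according to whether it is empty or has smallest part $i\geq j$ (in which case the remaining parts lie in $P_{i+m}$) gives
\begin{equation*}
P_j=1+\sum_{i=j}^{\infty}X_i\,P_{i+m},
\end{equation*}
and subtracting the corresponding identity for $P_{j+1}$ yields the clean three-term recursion
\begin{equation*}
P_k=P_{k+1}+X_k\,P_{k+m}.
\end{equation*}
Each $P_j$ has constant term $1$, hence is invertible, so all inverses below are legitimate.

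Next I would compute the shifts of $G$. Since $\sh^k$ is a continuous algebra map it preserves products and commutes with inversion, so $\sh^kG=X_k\,P_{m+k}\,P_{k+1}^{-1}$, and the recursion gives
\begin{equation*}
1+\sh^kG=\bigl(P_{k+1}+X_kP_{m+k}\bigr)P_{k+1}^{-1}=P_kP_{k+1}^{-1},
\qquad\text{hence}\qquad
\frac{1}{1+\sh^kG}=P_{k+1}P_k^{-1}.
\end{equation*}
Substituting into the defining product and reading the noncommutative factors in the prescribed decreasing order $k=m-1,\dots,1$, they telescope:
\begin{equation*}
X_0\prod_{k=m-1}^{1}\frac{1}{1+\sh^kG}=X_0\,(P_mP_{m-1}^{-1})(P_{m-1}P_{m-2}^{-1})\cdots(P_2P_1^{-1})=X_0\,P_mP_1^{-1}=G.
\end{equation*}
Thus $G$ solves the implicit equation, and uniqueness forces $G=\RR_{m-1}$, which is \eqref{eq.quotientpart1}. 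The main obstacle is precisely this step: getting the first-part recursion and the order of the noncommutative telescoping product exactly right. Once the identity $1+\sh^kG=P_kP_{k+1}^{-1}$ is in hand, the product collapses with no further work.

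Finally, the two scalar versions follow by transporting \eqref{eq.quotientpart1} through the algebra maps $\ab$ and $\um\ab$, which preserve products and inversion. The only translation needing comment is the interaction of the shift with the umbral map: raising every part of a length-$\ell$ partition by $m-1$ multiplies its monomial by $q^{(m-1)\ell}$, which is exactly the effect of $z\mapsto zq^{m-1}$ on $\Pa_m(z)$, so $\um\ab(\sh^{m-1}\Pa_m)=\Pa_m(zq^{m-1})$, while $\um\ab(X_0)=z$ and $\um\ab(\Pa_m)=\Pa_m(z)$. Feeding these into \eqref{eq.quotientpart1} yields $\RR_{m-1}(z)=z\,\Pa_m(zq^{m-1})\,\Pa_m(z)^{-1}$; the partially abelianized form is obtained identically by applying $\ab$ and recording the length via $z$ before specializing the variables, the front factor, numerator and denominator mirroring $X_0$, $\sh^{m-1}\Pa_m$ and $\Pa_m$ respectively.
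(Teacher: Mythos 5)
Your proof is correct, and it takes a genuinely different route from the paper's. The paper proves Eq.~(\ref{eq.quotientpart1}) structurally: it right-composes the factorization $\Pa_m\spl X_0\Pi^{m-1}=\Pi^{\infty}$ of Eq.~(\ref{eq.shplethymtoone}) with $\RR_{m-1}=(X_0\Pi^{m-1})^{\la-1\ran}$ to get $\Pa_m=\Pi^{\infty}\spl\RR_{m-1}$, and then simplifies $X_0(\sh^{m-1}\Pa_m)(\Pa_m)^{-1}$ by pulling the ordinary product through the right plethysm $-\spl\RR_{m-1}$, leaving $X_0\,(\Pib_{m-1}(-X))\spl\RR_{m-1}=\RR_{m-1}$. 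You instead verify directly that the candidate $G=X_0P_mP_1^{-1}$, with $P_j=\sh^{j-1}\Pa_m$, satisfies the defining fixed-point equation: the first-part recursion $P_k=P_{k+1}+X_kP_{k+m}$ gives $1+\sh^kG=P_kP_{k+1}^{-1}$, the product over $k=m-1,\dots,1$ telescopes to $P_mP_1^{-1}$, and uniqueness from Proposition~\ref{prop.implicit} finishes the job. All the delicate points check out: the recursion is a correct disjoint decomposition of $m$-distinct partitions by least part, the noncommutative telescoping respects the prescribed decreasing order of the factors, and the admissibility conditions $\la F,1\ran=\la F,Y_0\ran=0$ hold since every monomial of $F$ carries the prefix $X_0$. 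The paper's route buys the intermediate identity $\Pa_m=\Pi^{\infty}\spl\RR_{m-1}$, which is of independent combinatorial interest and whose dual is reused for Corollary~\ref{cor.quotientcomp}; your route is more elementary and self-contained --- it needs neither Eq.~(\ref{eq.shplethymtoone}) nor the shift-plethystic inverse, only uniqueness of solutions of implicit equations --- and it exhibits the classical continued-fraction mechanism, the convergent ratios $P_{k+1}P_k^{-1}$ of contiguous partition generating functions, that underlies the Rogers--Ramanujan case $m=2$.
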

\begin{proof}
	It is enough to prove Eq. (\ref{eq.quotientpart1}). By taking right shift-plethysm with $\RR_{m-1}=(X_0\Pi^{m-1})^{\la -1\ran}$ in both sides of Eq.  (\ref{eq.shplethymtoone}) we get 
	\begin{equation*}
	\Pa_{m}=\Pi^{\infty}\spl \RR_{m-1}.
	\end{equation*}
	Then
	\begin{align*}X_0(\sh^{m-1}\Pa_m)(\Pa_m)^{-1}&=X_0(\prod_{k=m}^{\infty}(1+X_k)\spl \RR_{m-1})(\prod_{k=1}^{\infty}(1+X_k)\spl\RR_{m-1})^{-1}\\&=X_0(\prod_{k=m}^{\infty}(1+X_k)(\prod_{k=\infty}^{1}\frac{1}{1+X_k}))\spl\RR_{m-1}\\&=X_0(\prod_{k=m-1}^{1}\frac{1}{1+X_k})\spl \RR_{m-1}=\RR_{m-1}.
	\end{align*}
	\end{proof}

Using Eq. (\ref{eq.mdistinct}),
\begin{equation}\label{eq.mrrfraction}
\RR_{m-1}(z)=z\frac{\Pa_{m}(zq^{m-1})}{\Pa_m(z)}=z\frac{\sum_{k=0}^{\infty}\cfrac{q^{m\binom{k+1}{2}}z^k}{(q;q)_k}}{\sum_{k=0}^{\infty}\cfrac{q^{m\binom{k}{2}+k}z^k}{(q;q)_k}}.
\end{equation}

Making $m=2$, from Eq. (\ref{eq.mrrfraction}) we recover the classical Rogers-Ramanujan continued fraction expressed as the ratio of the two Rogers-Ramanujan functions $\Pa_2(zq)$ and $\Pa_2(z)$.

By Eq. (\ref{eq.kdual}), since we have 
$\C^{(m-1)}=\Pa_{m}^!$ (Example \ref{ex.dualpm}), and $\sh^{m-1}\C^{(m-1)}=(\sh^{m-1}\Pa_m)^!$, from Theorem \ref{theo.quotientpart} we obtain
\begin{corollary}\label{cor.quotientcomp}The $(m-1)$-headed continued fraction $\msA_{\Pib_{m-1}}$, its abeleanization and $q$-series can be respectively expressed as the quotients
	\begin{eqnarray}
	&&\label{eq.comptree1}\msA_{\Pib_{m-1}}=X_0\,(\sh^{m-1}\C^{(m-1)})^{-1}(\C^{(m-1)})\label{eq.firstquotient}\\
	&&\msA_{\Pib_{m-1}}(\x,z)=zx_0\frac{\C^{(m-1)}(\x,z)}{\sh^{m-1}\C^{(m-1)}(\x,z)}\\
	&&\msA_{\Pib_{m-1}}(z)=z\frac{\C^{(m-1)}(z)}{\C^{(m-1)}(zq^{m-1})}.
	\end{eqnarray}	
	\end{corollary}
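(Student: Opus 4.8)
The plan is to read off the first, noncommutative identity \eqref{eq.firstquotient} from \eqref{eq.quotientpart1} by applying the sign-changing algebra automorphism $\nu\colon X_k\mapsto -X_k$ of $\K\la\la\X\ran\ran$, and then to obtain the $(\x,z)$-form and the $q$-form by pushing the result through the abeleanization $\ab$ and the umbral map $\um$. The bridge between the partition side and the composition side is Proposition \ref{prop.kdual1}: since the graded generating function of a linked language is precisely its image under $\nu$, namely $L^{g}=L(-X)$, the inversion formula \eqref{eq.kdual} reads $L^!=(L(-X))^{-1}$. By Example \ref{ex.dualpm} this gives $\C^{(m-1)}=\Pa_m^!=(\Pa_m(-X))^{-1}$, and from $\sh^{m-1}\C^{(m-1)}=(\sh^{m-1}\Pa_m)^!$ it gives $(\sh^{m-1}\Pa_m)(-X)=(\sh^{m-1}\C^{(m-1)})^{-1}$.

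First I would apply $\nu$ to \eqref{eq.quotientpart1}. The map $\nu$ is an algebra automorphism that commutes with the shift $\sh$ and preserves multiplicative inverses, so the right-hand side is transformed factor by factor:
\begin{equation*}
\nu\bigl(X_0\,(\sh^{m-1}\Pa_m)\,(\Pa_m)^{-1}\bigr)=(-X_0)\,\bigl((\sh^{m-1}\Pa_m)(-X)\bigr)\,\bigl(\Pa_m(-X)\bigr)^{-1}.
\end{equation*}
The two K-duality identities above turn the middle factor into $(\sh^{m-1}\C^{(m-1)})^{-1}$ and the last factor into $\C^{(m-1)}$. On the left-hand side the relation $\msA_{\Pib_{m-1}}=-\RR_{m-1}(-X)$ recorded in the Introduction gives $\nu(\RR_{m-1})=\RR_{m-1}(-X)=-\msA_{\Pib_{m-1}}$. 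Cancelling the overall sign then yields exactly
\begin{equation*}
\msA_{\Pib_{m-1}}=X_0\,(\sh^{m-1}\C^{(m-1)})^{-1}\,\C^{(m-1)}.
\end{equation*}

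The other two identities require no new idea. Applying the algebra map $\ab$ sends each factor to the corresponding series in the commuting variables $\x$ and produces the $(\x,z)$-quotient, while applying $\um$ uses $X_0\mapsto z$ together with the recorded umbral rule $\sh^{m-1}S(z)=S(zq^{m-1})$ to replace $\sh^{m-1}\C^{(m-1)}$ by $\C^{(m-1)}(zq^{m-1})$; since $\um$ respects products and inverses, the right-hand side becomes $z\,\C^{(m-1)}(z)/\C^{(m-1)}(zq^{m-1})$.

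The only delicate point is the bookkeeping of the sign change, which I expect to be the main obstacle. Concretely, one must justify three functoriality facts: that $\nu$ intertwines the enriched-tree series as $\msA_{\Pib_{m-1}}=-\RR_{m-1}(-X)$ (the single place where the interaction of $\nu$ with shift-plethysm, and not merely with the product, is used); that $\nu$ commutes with $\sh$ and preserves inverses; and that $(\sh^{n}L)^{g}=\sh^{n}(L^{g})=(\sh^{n}L)(-X)$, which holds because the shift relabels colours without changing word length. Granting these, each of the three claimed identities falls out of \eqref{eq.quotientpart1} by a single application of $\nu$, $\ab$, or $\um$.
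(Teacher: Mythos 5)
Your argument is correct and coincides with the paper's own derivation of Corollary \ref{cor.quotientcomp}, which likewise obtains Eq. \eqref{eq.comptree1} from Theorem \ref{theo.quotientpart} via the K-dualities $\C^{(m-1)}=\Pa_m^!$ and $\sh^{m-1}\C^{(m-1)}=(\sh^{m-1}\Pa_m)^!$ together with Proposition \ref{prop.kdual1}, and then passes to the abeleanization and the umbral map; your write-up only makes explicit the sign-change automorphism and the relation $\msA_{\Pib_{m-1}}=-\RR_{m-1}(-X)$ that the paper uses implicitly. (The paper later also gives a second, independent combinatorial proof via the factorization $\C^{(m)}=\Pib_{\infty}\spl\msA_{\Pib_m}$, but that is presented as an alternative.)
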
	
By umbralization, and using Eq. (\ref{eq.mm1comp}), we get
\begin{equation}\label{eq.hydraminus}\begin{split}
&\msA_{\Pib_{m-1}}(z)=\\&=\frac{z}{\prod_{k_1=1}^{m-1}\left(1-\frac{zq^{k_1}}{\prod_{k_2=1+k_1}^{m+k_1-1}\left(1-\frac{zq^{k_2}}{\prod_{k_3=1+k_1+k_2}^{m+k_1+k_2-1}\left(1-\frac{zq^{k_3}}{\ddots}\right)}\right)}\right)}=\frac{1+\sum_{k=1}^{\infty}(-1)^k\cfrac{q^{m\binom{k}{2}+k}}{(q;q)_k}z^k}{1+\sum_{k=1}^{\infty}(-1)^k\cfrac{q^{m\binom{k+1}{2}}}{(q;q)_k}z^k},\end{split}
\end{equation}
In the following subsection we shall prove Corollary \ref{cor.quotientcomp} by establishing a natural combinatorial link between shift-plethystic trees (enriched with partitions) and cyclic compositions.

\subsection{Compositions as branched shift-plethystic trees.}

We say that a composition $\kap=(k,k_2,\dots,k_{\ell})$ is {\em {\em  cyclic}} if its least component is the first one,
$$k<k_i,\, \mbox{for every $i$, }1<i\leq \ell.$$ 
The word $X_{\kap}$ corresponding to the cyclic composition $\kap$ can be written as
$$X_{\kap}=\sh^{k} X_{0}X_{\kap'},$$ where $\kap'$ is the composition $(k_2-k, k_3-k,\dots, k_{\ell}-k).$  
Then, the generating series of the cyclic compositions having  $k$ as minimum is equal to $\sh^{k} X_0\C.$
A composition can be  uniquely factored as a list of cyclic compositions
\begin{equation}\label{eq.minima}\kap=\mu_1\boldsymbol{\omega}_1|\mu_2\boldsymbol{\omega}_2|\dots |\mu_k\boldsymbol{\omega}_k,\end{equation}
where $\boldsymbol{\mu}=(\mu_1,\mu_2,\dots,\mu_k)$ is a partition (listed in decreasing form). The partition $\boldsymbol{\mu}$, that we call the {\em local minima list}, is defined recursively as follows. We make $\mu_1$ equal to the first element of the composition. Given that we have defined $\mu_1,\mu_2,\dots, \mu_j$, $1\leq j\leq k-1$, define $\mu_{j+1}$ as the first element of $\kap$  after $\mu_{j}$  that is less than or equal to  it. Once we have found $\boldsymbol{\mu}$, put a bar before each $\mu_i$, $i=2, 3,\dots,k$. Each composition $\mu_i\boldsymbol{\omega}_i$ is obviously cyclic and $X_{\kap}$ is in the language,
\begin{equation*}(X_{\mu_1}\spl X_0\C)(X_{\mu_2}\spl X_0\C)\dots (X_{\mu_k}\spl X_0\C)\end{equation*}

\begin{theorem}\label{th.infhydra} We have the identity, 
	\begin{equation}\label{eq.partitioncomposition}\C=\Pib_{\infty}\spl X_0 \C
	\end{equation}
\end{theorem}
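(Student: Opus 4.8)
The plan is to establish (\ref{eq.partitioncomposition}) as an identity of languages, by matching the unique cyclic-block factorization of a composition against the expansion of the right-hand side under the definition of shift-plethysm. First I would unfold $\Pib_{\infty}\spl X_0\C$. Since $X_0\C$ has zero constant term, $\la X_0\C,1\ran=0$, the shift-plethysm is well defined by Eq. (\ref{eq.shiftplethys}); and since $\Pib_{\infty}$ is, as a language, the set of words $X_{\boldsymbol{\mu}}=X_{\mu_1}X_{\mu_2}\dots X_{\mu_k}$ indexed by partitions $\mu_1\geq\mu_2\geq\dots\geq\mu_k\geq 1$ (each occurring with coefficient one), the definition gives
$$\Pib_{\infty}\spl X_0\C=\sum_{\boldsymbol{\mu}}X_{\boldsymbol{\mu}}\spl X_0\C=\sum_{\boldsymbol{\mu}}(\sh^{\mu_1}X_0\C)(\sh^{\mu_2}X_0\C)\dots(\sh^{\mu_k}X_0\C),$$
the sum ranging over all partitions $\boldsymbol{\mu}$.

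Next I would use the identification, recorded just before the statement, of $\sh^{\mu_i}X_0\C$ with the generating series of the cyclic compositions whose least (equivalently, first) part is $\mu_i$. Consequently each summand is the language of concatenations $c_1c_2\dots c_k$ of cyclic compositions with prescribed minima $\mu_1\geq\mu_2\geq\dots\geq\mu_k$, and the full right-hand side is the language of all concatenations of cyclic compositions whose block-minima are weakly decreasing. Because a concatenation of cyclic compositions (each with parts $\geq 1$) is again a composition, every word on the right lies in $\C$; it thus remains to show that the correspondence between compositions and such concatenations is a bijection.

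The forward direction is the local-minima-list decomposition (\ref{eq.minima}): a composition $\kap$ is sent to its partition $\boldsymbol{\mu}$ of local minima together with its ordered blocks $\mu_1\boldsymbol{\omega}_1,\dots,\mu_k\boldsymbol{\omega}_k$. Both required properties follow directly from the recursion defining $\boldsymbol{\mu}$: by construction $\mu_{j+1}\leq\mu_j$, so $\boldsymbol{\mu}$ is a partition, and since $\mu_{j+1}$ is the \emph{first} entry after $\mu_j$ that is $\leq\mu_j$, every entry of $\boldsymbol{\omega}_j$ strictly exceeds $\mu_j$, so each block $\mu_j\boldsymbol{\omega}_j$ is cyclic.

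The step I expect to require the most care is the reverse direction: reassembling a list of cyclic blocks with weakly decreasing minima and re-running the recursion must recover exactly the original block boundaries. The crucial observation is that inside the $j$-th block all parts after the first strictly exceed its minimum $\mu_j$, whereas the next block opens with $\mu_{j+1}\leq\mu_j$; hence, scanning forward from $\mu_j$, the procedure skips over the interior of the $j$-th block and halts precisely at the head of the $(j+1)$-st. An induction on $j$ then shows the two maps are mutually inverse, so the two languages coincide termwise and (\ref{eq.partitioncomposition}) follows as an identity of formal power series.
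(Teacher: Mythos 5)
Your proposal is correct and follows essentially the same route as the paper: both expand $\Pib_{\infty}\spl X_0\C$ over partitions $\boldsymbol{\mu}$ and match each summand $(\sh^{\mu_1}X_0\C)\cdots(\sh^{\mu_k}X_0\C)$ with the compositions having $\boldsymbol{\mu}$ as local minima list via the cyclic-block factorization (\ref{eq.minima}). The only difference is that you spell out the verification that this factorization is a bijection (in particular the reverse direction), which the paper leaves implicit.
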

\begin{proof}	Denote by $\C_{\boldsymbol {\mu}}$ the generating series of the compositions having partition $\boldsymbol {\mu}=(\mu_1,\mu_2,\dots,\mu_k)$ as local minima list. We have the identity
	\begin{equation*}
	\C_{\boldsymbol {\mu}}=(\sh^{\mu_1}X_0\C)(\sh^{\mu_2}X_0\C)\dots(\sh^{\mu_k}X_0\C)=X_{\boldsymbol {\mu}}\spl X_0\C
	\end{equation*} 
	Since $\C=\sum_{\boldsymbol {\mu}}\C_{\boldsymbol {\mu}}$, we obtain
	\begin{equation*}
	\C=\sum_{\boldsymbol {\mu}}X_{\boldsymbol {\mu}}\spl X_0\C=\prod_{k=\infty}^1 \frac{1}{1-X_k}\spl X_0\C.
	\end{equation*}  
	
\end{proof}
\begin{figure}
	\begin{center}\includegraphics[width=100mm]{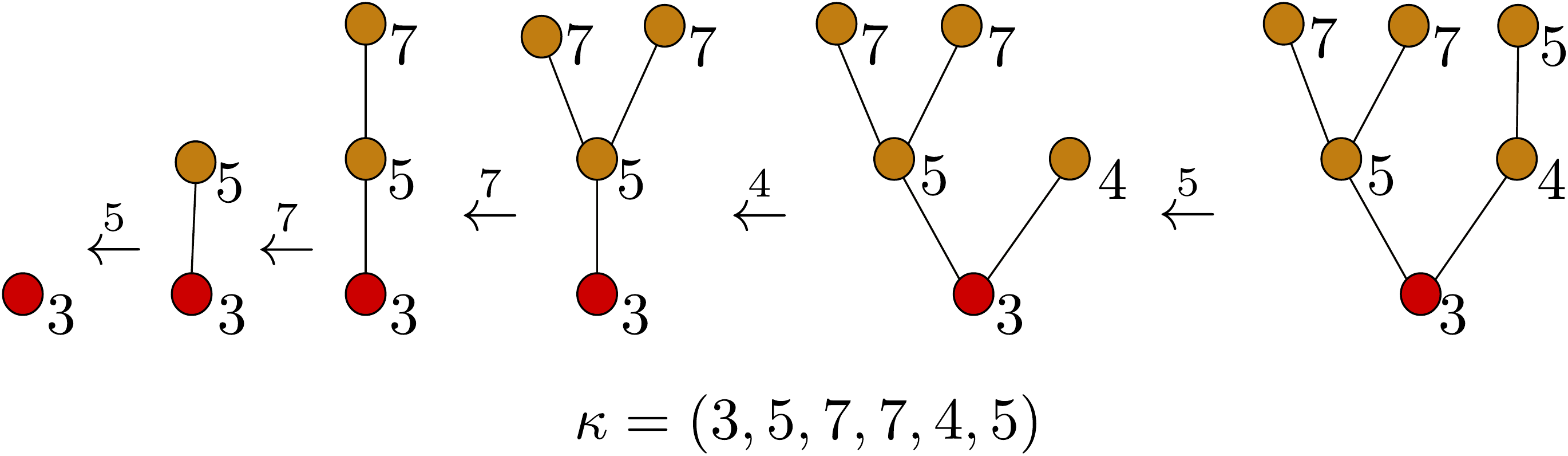}
	\end{center}\caption{Insertion algorithm from $\kap=(3,5,7,7,4,5)$ to the associated shift-plethystic tree in $\sh^3\msA_{\Pib_{\infty}}$ }\label{fig.insertion}
\end{figure}
\begin{corollary}\label{cor.treescompositions}
	We have the identity
	\begin{equation}\label{eq.comptrees}
	X_0\C=\msA_{\Pib_\infty}
	\end{equation}
\end{corollary}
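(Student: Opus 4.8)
The plan is to deduce the corollary from Theorem \ref{th.infhydra} by invoking uniqueness of solutions of shift-plethystic implicit equations, with Theorem \ref{th.infhydra} supplying all the combinatorial content. Set $G := X_0\C$. Since $\C$ has constant term $1$ (the empty composition), $G$ has zero constant term, $\la G,1\ran = 0$, so $G$ is an admissible candidate solution and the substitution $\Pib_\infty \spl G$ is well defined.

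By Theorem \ref{th.infhydra} (Eq. \ref{eq.partitioncomposition}) we have $\C = \Pib_\infty \spl X_0\C = \Pib_\infty \spl G$. Substituting this identity back into the definition of $G$ yields
$$G = X_0\C = X_0(\Pib_\infty \spl G),$$
which is precisely the implicit equation (Eq. \ref{eq.spltrees}) defining $\msA_{\Pib_\infty}$, with enriching series $M = \Pib_\infty$.

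The final step is to apply Proposition \ref{prop.implicit}. The governing series $F(X,Y) = X_0(\Pib_\infty \spl Y)$ satisfies the two hypotheses of a shift-plethystic implicit equation: it has no constant term and the coefficient of the word $Y_0$ vanishes, both because every monomial of $F$ begins with the letter $X_0$ and hence can be neither the empty word nor the word $Y_0$. Therefore the equation $Y_0 = F(X;Y)$ has a unique solution. Since both $G = X_0\C$ and $\msA_{\Pib_\infty}$ solve it, uniqueness forces $X_0\C = \msA_{\Pib_\infty}$, which is exactly Eq. \ref{eq.comptrees}.

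There is no genuinely hard analytic step here, as Theorem \ref{th.infhydra} already does the work; the only points demanding care are checking that $F$ meets the hypotheses of Proposition \ref{prop.implicit}, so that uniqueness is available, and that $G$ has zero constant term, so that it qualifies as a solution. As an alternative I would note that the identity can also be read off directly from the local-minima factorization in Eq. \ref{eq.minima}: the recursive decomposition of a composition according to its local minima list coincides with the recursive description of a $\Pib_\infty$-enriched shift-plethystic tree rooted at $0$, and the insertion algorithm illustrated in Fig. \ref{fig.insertion} makes this bijection explicit. I would present the algebraic uniqueness argument as the main line and mention the bijection as its combinatorial interpretation.
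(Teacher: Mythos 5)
Your proposal is correct and follows essentially the same route as the paper: multiply Eq. (\ref{eq.partitioncomposition}) by $X_0$ to see that $X_0\C$ satisfies the implicit equation $Y_0 = X_0(\Pib_\infty\spl Y)$ defining $\msA_{\Pib_\infty}$, and conclude by the uniqueness statement of Proposition \ref{prop.implicit}. Your version merely makes explicit the hypothesis checks (zero constant term of $G$ and the conditions on $F$) that the paper leaves implicit.
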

\begin{proof}
	Multiplying by $X_0$ both sides of Eq. (\ref{eq.partitioncomposition}),  we get	that the series  $X_0\C$ satisfy the implicit equation $X_0\C=X_0(\Pib_{\infty}\spl X_0\C)$, which is the same implicit equation defining $\msA_{\Pib_{\infty}}$.\end{proof}
By shifting in Eq. (\ref{eq.comptrees}) we obtain the identity
\begin{equation}\label{eq.shiftcyclic}
X_k\sh^k\C=\sh^k \msA_{\Pib_\infty}
\end{equation}
The implicit equation defining $\msA_{\Pib_{\infty}}$ together with  Eq. (\ref{eq.shiftcyclic}) gives us the following bijection between cyclic compositions and shift-plethystic trees. Consider a cyclic composition $\kap=(k,k_2, k_3,\dots,k_\ell)$, 
\begin{enumerate}\item \label{item.one} Choose the root of the tree to be $k$,
	\item\label{item.two} Factor $(k_2, k_3,\dots,k_\ell)$ as in Eq. (\ref{eq.minima}).
	\item Attach to the root $k$ the local minima list $\mu_1, \mu_2,\dots,\mu_k$ in the same order. \item Apply the same procedure from Item \ref{item.two} to each  of the compositions $\boldsymbol{\omega}_i$, having as root $\mu_i$  for $i=1$ to $k$. Continue until each composition in each branch is a singleton.\end{enumerate}

Alternatively, the following simple algorithm also builds the same shift-plethystic tree out of the composition $\kap$. 
\begin{definition}\textbf{Insertion algorithm}\label{def.insertion}
	\begin{enumerate}
		\item  Given a cyclic composition $\kap=(k,k_2,k_3,\dots,k_{\ell})$, define $T_1$ as the singleton tree with root labeled $k_1=k$.  
		\item {\em {\em  Insertion.}}
		If $\ell\geq 2$, assume that we have constructed a tree $T_j$ with labels $(k,k_2,\dots,k_j)$, the composition with the first $j$ components of $\kap$, $2\leq j<\ell$. To insert the vertex with label $k_{j+1}$ in the tree $T_j$ we follow the steps,
		\begin{enumerate}\item  Look at the rightmost branch of $T_{j}$, from leaf to root, for the first vertex which label is strictly less than $k_{j+1}$. We are sure that there exists such  vertex, because the root $k$ is strictly less than any other component of $\kap.$
			\item\label{stepb} Once we find this vertex,  we append $k_{j+1}$ to it as its rightmost child. It is clear that $k_{j+1}$ is less than or equal to the label of its left hand side sibling, if any. \end{enumerate}
		\item By successively inserting the components of $(k_2,k_3,\dots,k_{\ell})$, we get a tree $T_{\ell}$ (see Fig. \ref{fig.insertion}).
	\end{enumerate}
\end{definition}

\begin{proposition}\label{prop.insertion}\normalfont{The insertion algorithm establishes a bijection between cyclic compositions with first element equal to $k$, and the words in $\sh^k\msA_{\Pib_{\infty}}$ associated to shift-plethystic trees.} 
\end{proposition}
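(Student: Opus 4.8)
The plan is to realize the insertion algorithm as the two-sided inverse of the preorder-reading map, so I would work with two maps: the map $\omega$ sending a tree in $\sh^k\msA_{\Pib_{\infty}}$ to the word obtained by reading its colors in preorder, and the insertion map $\iota$ of Definition \ref{def.insertion} sending a cyclic composition with first element $k$ to a tree. The one structural fact that drives everything comes from Remark \ref{rm.spt} applied to $M=\Pib_{\infty}$: since $\kap-k$ must lie in $\Pib_{\infty}$, a partition in decreasing order with parts at least $1$, every internal vertex $v$ has children whose colors, read left to right, are weakly decreasing and each strictly larger than the color of $v$. I would immediately record two consequences: (a) along any root-to-leaf path the colors strictly increase, so the root color $k$ is the strict minimum of the whole tree; and (b) in particular, along the rightmost branch the colors strictly increase from root to leaf.

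From (a) the preorder word $\omega(T)$ is a composition whose first entry $k$ is a strict minimum, i.e.\ a cyclic composition with first element $k$, so $\omega$ is well defined. For $\iota$ I would check that each insertion step yields a legal tree. When $k_{j+1}$ is inserted, the target vertex $v$ --- the first vertex with label strictly less than $k_{j+1}$, scanning the rightmost branch from leaf to root --- exists because the root has label $k<k_{j+1}$, the composition being cyclic. Appending $k_{j+1}$ as the rightmost child of $v$ keeps a child's color strictly above its parent's, and by (b) the rightmost child of $v$ already present sits just below $v$ on the rightmost branch and was scanned \emph{before} $v$ with label $\ge k_{j+1}$; hence the new child is at most its left sibling, preserving the weakly decreasing condition. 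Thus $\iota(\kap)\in\sh^k\msA_{\Pib_{\infty}}$.

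Next I would prove $\omega(\iota(\kap))=\kap$ by induction on the number of inserted entries, carrying the invariant that after inserting the first $j$ entries the partial tree $T_j$ has preorder word $(k,k_2,\dots,k_j)$ and its rightmost branch is exactly the path from the root to the last-inserted vertex. Since $v$ lies on that rightmost branch, appending $k_{j+1}$ as the rightmost child of $v$ places it, in preorder, immediately after the entire subtree of $v$; but that subtree currently terminates the preorder reading of $T_j$, so $k_{j+1}$ becomes the new last letter and the preorder word of $T_{j+1}$ is $(k,k_2,\dots,k_{j+1})$, re-establishing the invariant.

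Finally, for $\iota(\omega(T))=T$ I would read the vertices of $T$ in preorder and show that inserting them one at a time rebuilds $T$; verifying that the algorithm recovers the true parent of each vertex is the step I expect to be the main obstacle. In a preorder traversal the parent $p$ of the $(j+1)$-st vertex is an ancestor of the $j$-th vertex, hence lies on the rightmost branch of the partial tree $T_j$, and the child $c$ of $p$ through which the traversal descended is the next vertex below $p$ on that branch; in $T$ one has $c\ge k_{j+1}$ because $k_{j+1}$ is appended as a new, hence smallest, rightmost child of $p$. By (b) the colors strictly increase down the rightmost branch, so the vertices with label $<k_{j+1}$ form a top segment ending exactly at $p$; equivalently, scanning from the leaf upward, $p$ is the \emph{first} vertex with label strictly below $k_{j+1}$. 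This is precisely the vertex chosen by the insertion rule, so the algorithm recreates every parent--child relation of $T$. Hence $\iota\circ\omega$ and $\omega\circ\iota$ are both the identity, and $\omega$, $\iota$ are mutually inverse bijections, as claimed.
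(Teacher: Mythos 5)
Your proof is correct and takes essentially the same approach as the paper: both realize the insertion algorithm as the two-sided inverse of the preorder-reading map, using the observation that the last-inserted vertex is always the rightmost leaf together with the structural description of the trees from Remark \ref{rm.spt}. The only difference is one of thoroughness --- you explicitly verify the composite $\iota(\omega(T))=T$ (surjectivity onto the words of $\sh^k\msA_{\Pib_{\infty}}$), a direction the paper's proof leaves implicit.
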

\begin{proof}	
	By induction we can see that the vertex labeled $k_j$ in $T_j$ is the leaf in its rightmost branch. So, we can recover the composition by taking out each time the rightmost leaf, putting them in a list, and then reading it backwards. The procedure to obtain the first list is called in the literature {\em post order} (from right to left). Reading this list backwards is equivalent to read the vertices of $T_\ell$ in preorder. 
	
	The word $\omega(T_{\ell})$ is in $\sh^k\msA_{\Pib_{\infty}}$,  because its root is $k$, and by Step \ref{stepb} of the algorithm, its  vertices are in strictly increasing order from father to son, and the children of each vertex are in weakly decreasing order from left to right. Equivalently, if $v$ is an internal vertex colored $k$ and the list of the colors of its children is $\kap$, then $(k_1-k, k_2-k,\dots,k_\ell-k)$ is a word in $\Pib_{\infty}$ according to Remark \ref{rm.spt}.\end{proof}
\begin{remark}\normalfont{Since $k_j$ is the leaf in the rightmost branch of $T_j$, if $j$ is a rise ($k_{j+1}>k_j$), the $j$th insertion will append $k_{j+1}$ as the first child of $k_j$. In the final tree $T_{\ell}$, $k_{j+1}$ will be the leftmost child of $k_j$, and its number of internal vertices give us the number of rises in $\kap$.}
\end{remark}
The same algorithm can be applied to cyclic compositions whose contiguous differences are upper bounded by some positive integer number $m$. 
\begin{theorem}\label{theo.mcompositionsfactor}
	The insertion algorithm gives a bijection between cyclic compositions in $\C^{(m)}$ (with first component equal to $k$), and words corresponding to shift-plethystic trees in $\sh^k\msA_{\Pib_{m}}$. Moreover we have the identity
	\begin{equation}\label{eq.mfactor}
	\C^{(m)}=\Pib_{\infty}\spl \msA_{\Pib_m}
	\end{equation}
\end{theorem}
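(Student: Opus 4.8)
The plan is to build on Proposition~\ref{prop.insertion}, which already gives a bijection between cyclic compositions with first element $k$ and words of $\sh^k\msA_{\Pib_\infty}$, and to show that it restricts to the stated bijection onto $\sh^k\msA_{\Pib_m}$. The whole point is to match the single ``global'' constraint defining $\C^{(m)}$ against the per-vertex constraint that characterizes $\Pib_m$-enriched trees in Remark~\ref{rm.spt}. Once the bijection is in hand, the identity~\eqref{eq.mfactor} follows by the same local-minima factorization used for Theorem~\ref{th.infhydra}.

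First I would set up the dictionary between the rises of a cyclic composition $\kap$ and the edges of its insertion tree $T$. By the remark following Proposition~\ref{prop.insertion}, $k_j$ is always the leaf of the rightmost branch of $T_j$; hence a position $j$ is a rise ($k_{j+1}>k_j$) exactly when $k_{j+1}$ gets appended to $k_j$, in which case it becomes the \emph{leftmost} child of $k_j$ in the final tree and the color difference across that edge equals $k_{j+1}-k_j$. Reading $T$ in preorder confirms the converse: the leftmost child of any internal vertex is visited immediately after it, so leftmost-child edges correspond bijectively to rises of $\kap$, with matching differences. At a non-rise ($k_{j+1}\le k_j$) the contiguous difference is $\le 0\le m$, so no constraint is active there.

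The crux is that bounding only the leftmost-child edges already bounds every edge. Since the children of a vertex colored $c$ are listed in weakly decreasing order $c_1\ge c_2\ge\dots\ge c_d$, the leftmost child $c_1$ realizes the maximal jump $c_1-c$, so $c_1-c\le m$ forces $c_i-c\le m$ for all $i$. By Remark~\ref{rm.spt}, $T$ is a $\Pib_m$-tree precisely when every parent--child difference lies in $[1,m]$; combined with the dictionary above, this is equivalent to all leftmost-child differences (i.e.\ all rises of $\kap$) being $\le m$, that is, to $\kap\in\C^{(m)}$. This gives the desired restricted bijection. I expect this reconciliation --- a per-vertex bound versus the contiguous-difference bound on the whole composition --- to be the main obstacle, though it collapses once one notices that the leftmost (largest) child carries the maximal jump.

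For~\eqref{eq.mfactor} I would then mirror the proof of Theorem~\ref{th.infhydra}. Factoring a composition uniquely into cyclic blocks along its local minima list $\boldsymbol{\mu}=(\mu_1,\dots,\mu_k)$, the only contiguous differences introduced at the bars are non-rises, hence automatically $\le m$; therefore $\kap\in\C^{(m)}$ if and only if each block $\mu_i\boldsymbol{\omega}_i$ lies in $\C^{(m)}$. Writing $\C^{(m)}_{\boldsymbol{\mu}}$ for the series of $\C^{(m)}$-compositions with local minima list $\boldsymbol{\mu}$, the restricted bijection identifies the cyclic block with minimum $\mu_i$ with $\sh^{\mu_i}\msA_{\Pib_m}$, so $\C^{(m)}_{\boldsymbol{\mu}}=(\sh^{\mu_1}\msA_{\Pib_m})\cdots(\sh^{\mu_k}\msA_{\Pib_m})=X_{\boldsymbol{\mu}}\spl\msA_{\Pib_m}$. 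Summing over all partitions $\boldsymbol{\mu}$ and using $\sum_{\boldsymbol{\mu}}X_{\boldsymbol{\mu}}=\Pib_\infty$ yields $\C^{(m)}=\Pib_\infty\spl\msA_{\Pib_m}$.
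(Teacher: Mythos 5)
Your proposal is correct and follows essentially the same route as the paper: restrict the insertion bijection of Proposition~\ref{prop.insertion} by observing that the leftmost child of an internal vertex colored $k_j$ is $k_{j+1}$ and, since children are weakly decreasing, carries the maximal parent--child difference, so the per-vertex $\Pib_m$-condition of Remark~\ref{rm.spt} is equivalent to $\kap\in\C^{(m)}$. You are in fact slightly more complete than the paper, which only spells out the forward direction and leaves the converse and the local-minima derivation of Eq.~(\ref{eq.mfactor}) implicit.
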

\begin{proof} Let $v$ be an internal vertex of tree associated to a tree $T_{\ell}$ coming from a cyclic composition in $\C^{(m)}$. If the color of $v$ is equal to $k_j$ for some part $k_j$ of $\kap$, then, its leftmost child has label $k_{j+1}$. Since $k_{j+1}-k_{j}\leq m$, then for any other child of $v$ with color say $k_r$, since $k_j< k_{r}\leq k_{j+1}$ we have that $1\leq k_r-k_j\leq k_{j+1}-k_j\leq m$. Hence, by Remark \ref{rm.spt}, the word of $T_{\ell}$ is in $\sh^k\msA_{\Pib_{m}}$.\end{proof} 
We are now ready to give an alternative proof of Corollary \ref{cor.quotientcomp}.
\begin{proof}
It is enough to prove Eq. (\ref{eq.comptree1}). 	 By Eq. (\ref{eq.mfactor}) we have that 
	\begin{equation*}\begin{split}
	X_0(\sh^{m-1} \C^{(m-1)})^{-1}(\C^{(m-1)})&=X_0(\prod_{k=\infty}^{m}\frac{1}{1-X_k}\spl\msA_{\Pib_{m-1}})^{-1}(\prod_{k=\infty}^1\frac{1}{1-X_k}\spl \msA_{\Pib_{m-1}})\\&=X_0(\prod_{k=m}^\infty(1-X_k)\prod_{k=\infty}^1\frac{1}{(1-X_k)})\spl \msA_{\Pib_{m-1}}\\&=X_0(\prod_{k=m-1}^1\frac{1}{1-X_k})\spl\msA_{\Pib_{m-1}}=\msA_{\Pib_{m-1}}.
	\end{split}
	\end{equation*}
\end{proof}

\begin{remark} Theorem \ref{th.infhydra} gives us also an expansion for $\C$ as a product of  $\infty$-headed hydra fraction  $$\C=\prod_{k=\infty}^{1}\frac{1}{1-X_k\sh^k\C}=\prod_{k_1=\infty}^{1}\frac{1}{1-X_{k_1}\prod_{k_2=\infty}^{1+k_1}\frac{1}{1-X_{k_2}\prod_{k_3=\infty}^{1+k_1+k_2}\frac{1}{1-X_{k_3}\cdots}}}.$$
	and its associated $q$-series
	$$\C(z)=\prod_{k=1}^{\infty}\frac{1}{1-q^k\C(zq^k)}=\prod_{k_1=1}^{\infty}\frac{1}{1-\prod_{k_2=1+k_1}^{\infty}\frac{zq^{k_1}}{1-\prod_{k_3=1+k_1+k_2}^{\infty}\frac{zq^{k_2}}{1-zq^{k_3}\cdots}}}.$$\end{remark}
Eq. (\ref{eq.partitioncomposition}) can now be rewritten as
$$\C=\Pib_{\infty}\spl\msA_{\Pib_{\infty}}.$$

From that we obtain a representation of compositions in terms of ordered forests of shift-pletystic trees (see Fig. \ref{compositiondecomposition}). 
\begin{figure}
	\begin{center}\includegraphics[width=80mm]{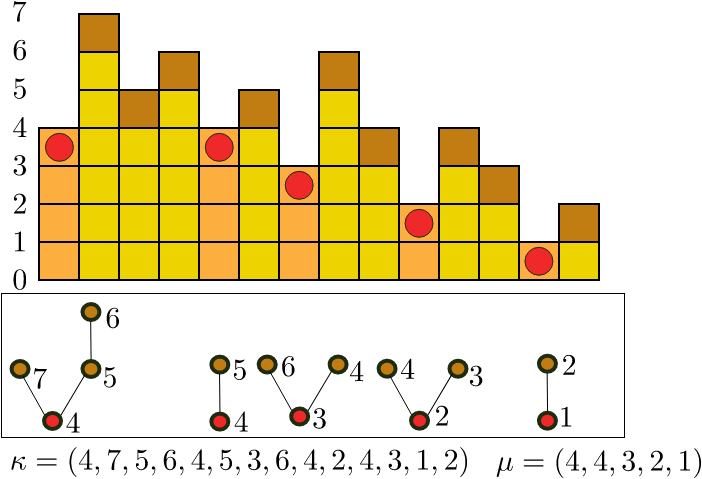}
	\end{center}\caption{Composition $\kap$, associated local minima partition $\boldsymbol {\mu}$, and ordered forest of shift-plethystic trees. }\label{compositiondecomposition}
\end{figure}

\begin{theorem}\label{theo.complocalminima}\normalfont{
		The abeleanization of the generating function of the compositions and its associated $q$-series, with the power of $t$ indicating the number of local minima, are given respectively by
		\begin{eqnarray}\label{eq.compominima}
		\C(\x,t)&=&\prod_{k=1}^{\infty}\frac{1-\sum_{j={k+1}}^\infty x_j}{1-\sum_{j=k+1}x_j-tx_k}\\\label{eq.compominima2}
		\C(z,t)&=&\prod_{k=1}^\infty\frac{1-q-zq^{k+1}}{1-q-zq^{k+1}+q^k(q-1)zt}
		\end{eqnarray}}
\end{theorem}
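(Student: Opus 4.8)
The plan is to refine the identity $\C = \Pib_{\infty}\spl X_0\C$ of Theorem \ref{th.infhydra} by marking local minima, and then abelianize and umbralize. Recall from the factorization (\ref{eq.minima}) that a composition with local minima list $\boldsymbol{\mu}=(\mu_1,\dots,\mu_k)$ has exactly $k=\ell(\boldsymbol{\mu})$ local minima, and that in Theorem \ref{th.infhydra} the outer factor $\Pib_{\infty}=\prod_{k=\infty}^{1}\frac{1}{1-X_k}$ precisely generates this local minima list, each outer letter contributing one local minimum. Hence the first step is to weight each part of $\boldsymbol{\mu}$ by a bookkeeping variable $t$. Since a partition with $r_k$ parts equal to $k$ has length $\sum_k r_k$, we have $\sum_{\boldsymbol{\mu}}t^{\ell(\boldsymbol{\mu})}X_{\boldsymbol{\mu}}=\prod_{k=\infty}^{1}\frac{1}{1-tX_k}$, so the $t$-enumerated series is
\begin{equation*}
\C(t)=\Big(\prod_{k=\infty}^{1}\frac{1}{1-tX_k}\Big)\spl X_0\C,
\end{equation*}
with $t$ scalar (passing untouched through the shift operators) and its exponent recording the number of local minima.

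Next I would abelianize, using $(R\spl S)(\x)=R(S(x_0,x_1,\dots),S(x_1,x_2,\dots),\dots)$. The abelianization of $X_0\C$ is $x_0/(1-\sum_{j\geq 1}x_j)$, so its $k$-shift is $G_k:=x_k/(1-\sum_{j\geq k+1}x_j)$; substituting $G_k$ for $x_k$ in $\prod_{k\geq 1}(1-tx_k)^{-1}$ gives
\begin{equation*}
\C(\x,t)=\prod_{k=1}^{\infty}\frac{1}{1-tG_k}=\prod_{k=1}^{\infty}\frac{1-\sum_{j\geq k+1}x_j}{1-\sum_{j\geq k+1}x_j-tx_k},
\end{equation*}
which is exactly (\ref{eq.compominima}). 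As a sanity check, setting $t=1$ makes this product telescope to $1/(1-\sum_{j\geq 1}x_j)=\C(\x)$, recovering Theorem \ref{th.infhydra}.

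Finally I would apply the umbral map $x_k\mapsto zq^k$. Here $\sum_{j\geq k+1}x_j\mapsto zq^{k+1}/(1-q)$ and $tx_k\mapsto tzq^k$; clearing the common denominator $1-q$ in numerator and denominator of each factor turns the numerator into $1-q-zq^{k+1}$ and, after expanding $-tzq^k(1-q)=-tzq^k+tzq^{k+1}=q^k(q-1)zt$, the denominator into $1-q-zq^{k+1}+q^k(q-1)zt$, giving (\ref{eq.compominima2}).

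The only genuinely non-routine point is the first step: justifying that marking each part of $\boldsymbol{\mu}$ by $t$ really counts local minima and is compatible with the plethystic substitution. This rests entirely on the combinatorial factorization (\ref{eq.minima}) together with Theorem \ref{th.infhydra}, both already established, so that once the marking is in place the remainder is a telescoping-style simplification followed by the umbral substitution, and I expect no further obstacles.
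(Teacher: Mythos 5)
Your proposal is correct and follows essentially the same route as the paper: both insert the marker $t$ into each factor of the outer $\Pib_{\infty}$ in the identity $\C=\Pib_{\infty}\spl X_0\C$ of Theorem \ref{th.infhydra} (one factor per local minimum), then abelianize and apply the umbral map. Your write-up is in fact slightly more explicit than the paper's about why the marking is compatible with the plethystic substitution, and the telescoping check at $t=1$ is a nice addition, but there is no substantive difference in method.
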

\begin{proof}
	The abelianization of $X_0\C$ is equal to 
	$\ab(X_0\C)=x_0\C(\x)=\frac{x_0}{1-\Al_1}$. By Eq. (\ref{eq.partitioncomposition}), and multiplying $x_0\C(\x)$ by $t$ to keep track of the number of local minima,  we obtain
	\begin{equation*}
	\C(\x)=\prod_{k=1}^{\infty}\frac{1}{1-t\frac{x_k}{1-\Al_{k+1}}}=\prod_{k=1}^{\infty}\frac{1-\Al_{k+1}}{1-\Al_{k+1}-tx_k}.
	\end{equation*} 
	The umbral map $\um:x_j\mapsto zq^j$ give us Eq. (\ref{eq.compominima2}), since
	$\mathfrak{u}(\Al_k)(z)=z\sum_{j=k}^{\infty} q^j=z\frac{q^{k}}{1-q}.$
\end{proof}

\noindent

%	$\msA_{(\frac{1}{1-X_1})}$
%\end{example}

%\begin{example}Hydra continued fraction
%	$\msA_{\Pib_2}=X_0 (\Pib_{2}\spl\msA_{\Pib_2})$

%\end{example}

\subsection{Partitions as branchless shift-plethystic trees}
shift-plethystic trees enriched with `one letter languages' are called branchless. They are obtained by choosing a subset $S$ of $\N$
and enriching with the language $1+\Al_S$. Hence, words in $\msA_{(1+\Al_S)}$ are of the form $X_{\boldsymbol{\lam}}$, where $\boldsymbol{\lam}$ satisfies $0=\lam_1\leq\lam_2\leq \dots\leq\lam_{\ell}$, with all its  rises $\lam_{i+1}-\lam_i$ in $S$. The shift $\sh^i$ applied to $\msA_{(1+\Al_S)}$ will give us the language of partitions (in increase order) with first part equal to $i$, $\lam_1=i$, and rises in $S$. By shift-plethysm with $1+\Al_1=\sum_{k=1}^{\infty}X_k$, we obtain the whole language of partitions with rises in the set $S$ (including the empty one). For example, by enriching with $M=(1+\Al_{\mathrm{odd}}),$ we get the branchless trees with odd rises (see Fig. \ref{fig.branchless}). By shift-plethysm with $(1+\Al_1)$, we get the series of partitions with odd rises,
$$\pa_{\mathrm{odd}}=(1+\Al_1)\spl \msA_{(1+\Al_{\mathrm{odd}})}.$$
	\begin{figure}[hbt!]\label{fig.branchless}
		\begin{center}\includegraphics[width=40mm]{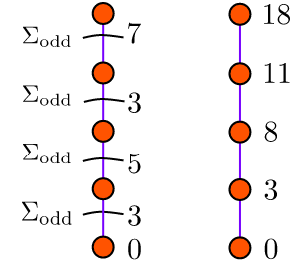}
			\caption{An example of a branchless shift-plethystic tree associated to partitions with odd rises, $\mathscr{A}_{(1+\Al_{\mathrm{odd}})}=X_0((1+\Al_{\mathrm{odd}})\spl\mathscr{A}_{(1+\Al_{\mathrm{odd}})} )$.}
		\end{center}
	\end{figure}

Going back to the general case, we have
		
	\begin{equation*}\begin{split}&\msA_{(1+\Al_S)}=X_0\times(1+\Al_S)\spl \msA_{(1+\Al_S)}, \mbox{ and, }\\&\Pa_S=1+\Al_1\spl \msA_{(1+\Al_S)}.\end{split}
	\end{equation*} 
\noindent The series $\Pa_S^+=\Al_1\spl \msA_{(1+\Al_S)}$ has as shift-plethystic inverse $\msA_{(1+\Al_S)}^{\la -1\ran}\spl (\Al_1)^{\la -1\ran}$, which by Eq. (\ref{ec.invtree}) and Example \ref{ex.inversesigma}, is equal to

	\begin{equation}\label{eq.inversepartition}
	(\pa_S^+)^{\langle-1\rangle}=X_0\frac{1}{1+\sum_{s\in S}X_s}\spl (X_{-1}-X_0)=(X_{-1}-X_0)\frac{1}{1+\sum_{s\in S}(X_{s-1}-X_s)}
	\end{equation}
By substitution of $\pa_S^+$ in the rightmost term of Eq. (\ref{eq.inversepartition})
	\begin{equation}\label{eq.psubs0}
(\sh^{-1}\pa_S^+-\pa_S^+)\frac{1}{1+\sum_{s\in S}(\sh^{s-1}\pa_S^+-\sh^s\pa_S)}=X_0.
\end{equation}
Hence 
\begin{equation}\label{eq.psubs}
\sh^{-1}\pa_S^+-\pa_S^+=X_0(1+\sum_{s\in S}(\sh^{s-1}\pa_S^+-\sh^s\pa^+_S)).
\end{equation}
\begin{theorem}\label{teo.brnchlespart}The $q$-series of $\pa_S$ is given by
\begin{equation}\label{eq.marvel}
\pa_S(z)=1+\sum_{k=1}^{\infty}\frac{q^{\binom{k+1}{2}}}{(1-q^k)}(\Se_{k-1})!(q)z^k
\end{equation}
\noindent Where $$\Se_k(q)=\sum_{s\in S}q^{k(s-1)},$$ and $(\Se_{k})!(q)$ is a symbolic expression for $\Se_k(q)\Se_{k-1}(q)\Se_{k-2}(q)\dots\Se_1(q),$
\begin{equation*}(\Se_{k})!(q):=\Se_k(q)\Se_{k-1}(q)\Se_{k-2}(q)\dots\Se_1(q).\end{equation*} 
\end{theorem}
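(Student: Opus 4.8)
The plan is to pass from the noncommutative Laurent identity (\ref{eq.psubs}) to a scalar functional equation for the one-variable $q$-series by applying the umbral map, and then to read off and solve a first-order recursion for its coefficients. Write $P(z)$ for the $q$-series of $\pa_S^+$, so that $\pa_S(z)=1+P(z)$ because $\pa_S=1+\pa_S^+$. Since $\um\circ\ab$ is an algebra map under which $X_0$ goes to $z$ and $\sh^n R$ goes to $R(zq^n)$ for every $n\in\Z$ (the negative shift being legitimate because (\ref{eq.psubs}) is read inside $\Lau$, where $\um$ extends with Laurent coefficients in $q$), each factor $X_0\,\sh^{j}\pa_S^+$ maps to $z\,P(zq^{j})$ and $\sh^{-1}\pa_S^+$ maps to $P(zq^{-1})$. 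Thus (\ref{eq.psubs}) becomes
\begin{equation*}
P(zq^{-1})-P(z)=z\Bigl(1+\sum_{s\in S}\bigl(P(zq^{s-1})-P(zq^s)\bigr)\Bigr).
\end{equation*}
First I would verify this umbralization term by term, which is the one step requiring genuine care.

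Next I would set $P(z)=\sum_{k\ge 1}a_k z^k$ and compare coefficients. The inner sum collapses, since $\sum_{s\in S}\bigl(q^{k(s-1)}-q^{ks}\bigr)=(1-q^k)\sum_{s\in S}q^{k(s-1)}=(1-q^k)\Se_k(q)$, so the right-hand side equals $z+\sum_{k\ge1}a_k(1-q^k)\Se_k(q)\,z^{k+1}$, while the left-hand side equals $\sum_{k\ge1}a_k(q^{-k}-1)z^k$. The coefficient of $z$ gives $a_1(q^{-1}-1)=1$, that is $a_1=q/(1-q)$, and the coefficient of $z^{k+1}$ gives
\begin{equation*}
a_{k+1}=a_k\,\frac{(1-q^k)\Se_k(q)}{q^{-(k+1)}-1}=a_k\,q^{\,k+1}\,\frac{1-q^k}{1-q^{k+1}}\,\Se_k(q).
\end{equation*}

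Finally I would solve this recursion by telescoping from $a_1$. The factors $q^{\,j+1}$ for $j=1,\dots,k-1$ multiply to $q^{\binom{k+1}{2}-1}$, the factors $(1-q^j)/(1-q^{j+1})$ telescope to $(1-q)/(1-q^k)$, and the factors $\Se_j(q)$ accumulate to $(\Se_{k-1})!(q)$; together with $a_1=q/(1-q)$ this produces $a_k=q^{\binom{k+1}{2}}(1-q^k)^{-1}(\Se_{k-1})!(q)$, and adding the constant term $1$ from $\pa_S=1+\pa_S^+$ gives exactly (\ref{eq.marvel}). The only delicate point is the first paragraph: confirming that the umbral image of (\ref{eq.psubs}) is precisely the displayed functional equation, since one must track how a product $X_0\,\sh^{j}\pa_S^+$ factors under $\um\circ\ab$ and ensure the negative shift $\sh^{-1}$ is handled consistently within $\Lau$; once the functional equation is secured, the coefficient extraction and the telescoping are entirely routine.
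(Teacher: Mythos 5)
Your proposal is correct and follows essentially the same route as the paper: umbralize Eq.~(\ref{eq.psubs}) to the functional equation $\pa_S^+(z/q)-\pa_S^+(z)=z\bigl(1+\sum_{s\in S}(\pa_S^+(zq^{s-1})-\pa_S^+(zq^s))\bigr)$, extract the coefficient of $z^k$ to get a first-order recursion, and telescope. Your initial value $a_1=q/(1-q)$ is in fact the one consistent with the closed form (the paper states the initial condition as $f_1^S(q)=1$, which appears to be a slip), so your version of the routine part is, if anything, slightly more careful.
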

\begin{proof}Let us denote by $f_k^S(q)$ the coefficients in the expansion of $\pa_S^+(z)$,
$$	\pa^+_S(z)=\sum_{k=1}^{\infty}f_k^S(q)z^n$$
The umbral morphism applied to the shift-plethystic inverse in Eq. (\ref{eq.psubs}) gives us
\begin{equation}\label{eq.psubs1}
\pa^+_S(z/q)-\pa^+_S(z)=z(1+\sum_{s\in S}(\pa_S^+(zq^{s-1})-\pa_S^+(zq^{s})))
\end{equation}
Taking the coefficient of $z^k$, $k\geq 1$, in both sides of Eq. (\ref{eq.psubs1}) we obtain the formula
$$q^{-k}f^S_{k}(q)-f_k^S(q)=\sum_{s\in S}(q^{(s-1)(k-1)}-q^{s(k-1)})f_{k-1}^S(q)=(1-q^{k-1})(\sum_{s\in S}q^{(s-1)(k-1)})f_{k-1}^S(q),$$
with initial condition $f_1^{S}(q)=1$. From that we obtain the recursion
$$f_k^S(q)=\frac{q^k}{1-q^k}(1-q^{k-1})(\sum_{s\in S}q^{(s-1)(k-1)})f_{k-1}^S(q)=\frac{q^k}{1-q^k}(1-q^{k-1})\mathcal{S}_{k-1}(q)f_{k-1}^S(q),$$
which gives us the result,
\begin{equation}\label{eq.marv1}
f_k^S(q)=\frac{q^{\binom{k+1}{2}}(q,q)_{k-1}}{(q,q)_k}(\Se_{k-1})!(q)=\frac{q^{\binom{k+1}{2}}}{1-q^k}(\Se_{k-1})!(q).
\end{equation}

\end{proof}

	%\begin{table}[hbt!]\begin{center}\def\arraystretch{1.5}
			%\begin{tabular}{|l|l|l|}
			%	\hline 
			%	Generating Function	& Shift-plethystic Inverse& Recursive formula  \\ 
			%	\hline 
			%	$\Lo_+$&$\frac{X_0}{1+X_1}$& $f_k(q)=q^{k-1}f_{k-1}(q)$\\\hline
				%$\pa^{\geq}_+$&$\frac{X_{-1}-X_0}{1+X_{-1}}$&$f_k(q)=\frac{q}{1-q^k}f_{k-1}(q)$\\
			%	\hline 
			%	$\pa_+^{>}$&$\frac{X_{-1}-X_0}{1+X_{0}}$&$f_k(q)=\frac{q^{k}}{1-q^k}f_{k-1}(q)$\\\hline
			%	$\Lb_1^+$&$\frac{X_{-1}-X_0}{1+X_{1}}$&$f_k(q)=\frac{q^{2k-1}}{1-q^k}f_{k-1}(q)$\\ \hline %$\Lb_2^+$&$\frac{X_{-2}-X_{-1}}{1+X_{0}}$&$f_k(q)=\frac{q^{2k}}{1-q^k}f_{k-1}(q)$\\\hline
			%	$\Lb_{n,m}^+$&$\frac{X_{-n}-X_{-n+1}}{1+X_{m-n}}$&$f_{k}(q)=\frac{q^{(m-n)k-1}}{1-q^k}f_{k-1}(q)$\\\hline
			%	$L_1^+$&$\frac{(X_{-1}-X_0)(1+X_1)}{(1+X_{-1})(1+X_{0})}$&$L_1(z)=\frac{L_2(z)L_0(z)}{L_2(z)+zL_{0}(z))}$ \\
				
			%	\hline
			%	$L_2^+$&$\frac{(X_{-2}-X_{-1})(1+X_0)}{(1+X_{-2})(1+X_{-1})}$&$L_2(z)=-z\frac{L_1(z)L_0(z)}{L_0(z)L_1(z)}$\\ \hline
		%	\end{tabular}
	%	\end{center}
%\end{table}

\begin{example}Consider the case of $m$-distinct partitions. The set of risings is equal to the integer interval $S=[m,\infty)=\{m, m+1, m+2, \dots\}$ and we have 
$$\mathcal{S}_k(q)=\sum_{s=m}^{\infty}q^{(s-1)k}=\sum_{s=m-1}^{\infty}(q^k)^s=\frac{q^{k(m-1)}}{1-q^k}.$$
Then, 
$$(\Se_{k-1})!(q)=\frac{q^{(m-1)\binom{k}{2}}}{(q;q)_{k-1}},$$
from Eq. (\ref{eq.marv1}) we get,
$$f_k^S(q)=\cfrac{q^{\binom{k+1}{2}+(m-1)\binom{k}{2}}}{(q;q)_k}=
\cfrac{q^{m\binom{k}{2}+k}}{(q;q)_k}
$$
and we recover Eq. (\ref{eq.mdistinct}).
\end{example}
\begin{example} The rises are in the integer interval  $S=[m,n]=\{k\in\N|m\leq k\leq n\}$, $0\leq m\leq n.$
	In this case $$\Se_k(q)=\sum_{s=m-1}^{n-1}q^{ks}=q^{k(m-1)}\frac{1-q^{k(n-m+1)}}{1-q^k},$$
	and $$\Se_{k-1}!(q)=\frac{q^{(m-1)\binom{k}{2}}(q^{n-m+1};q^{n-m+1})_{k-1}}{(q;q)_{k-1}}$$
	
	  By Eq. (\ref{eq.marv1})
$$f_k^S(q)=\frac{q^{m\binom{k}{2}+k}(q^{n-m+1};q^{n-m+1})_{k-1}}{(q;q)_k}$$
and
\begin{equation*}
\pa_{[m,n]}(z)=1+\sum_{k=1}^\infty \frac{q^{m\binom{k}{2}+k}(q^{n-m+1};q^{n-m+1})_{k-1}}{(q;q)_k}z^k.
\end{equation*} 
In particular, for $m=1$,
\begin{equation*}
\pa_{[1,n]}(z)=1+\sum_{k=1}^\infty \frac{q^{\binom{k+1}{2}}(q^n;q^n)_{k-1}}{(q;q)_k}z^k.
\end{equation*} 
And for $m=n$, $S=\{m\}$,
\begin{equation*}
\pa_{\{m\}}(z)=1+\sum_{k=1}^\infty \frac{q^{m\binom{k}{2}+k}}{1-q^k}z^k.
\end{equation*} 

\end{example}
\begin{example}The rises are multiples of $m$, $S=m\N$, $m\in \N$. In this case $$\Se_k(q)=\sum_{j=0}^\infty q^{k(mj-1)}=\frac{1}{q^k(1-q^{mk})}.$$
	$(\Se_{k-1})!(q)$ is easily computed,
	$$(\Se_{k-1})!(q)=\frac{1}{q^{\binom{k}{2}}(q^m;q^m)_{k-1}}$$
	 Then we get 
	\begin{equation}\label{eq.multiple1}\pa_{m\N}(z)=1+\sum_{k=1}^\infty \frac{q^k}{(1-q^k)(q^m;q^m)_{k-1}}z^k.\end{equation}
	If we exclude zero from the set of rises, $S=m\N_+$, we get the formula for the set of partitions with risings a multiple of $m$, without repetitions
		\begin{equation}\label{eq.multiple2}\pa_{m\N_+}(z)=1+\sum_{k=1}^\infty \frac{q^{m\binom{k}{2}+k}}{(1-q^k)(q^m;q^m)_{k-1}}z^k.\end{equation}
		Since in this case $$\Se_k(q)=\frac{q^{mk}}{q^k(1-q^{mk})}=\frac{q^{(m-1)k}}{1-q^{mk}}.$$
	
\end{example}
\begin{example}The rises are congruent with $l$ module $m$, $0\leq l\leq m$. Then $$\Se_k(q)=\frac{q^{(l-1)k}}{1-q^{mk}},$$ and we obtain
		\begin{equation*}
		\pa_S(z)=1+\sum_{k=1}^\infty \frac{q^{l\binom{k}{2}+k}}{(1-q^k)(q^m;q^m)_{k-1}}z^k
		\end{equation*} 
\noindent from which we recover, for $l=0$ and $l=m$, formulas (\ref{eq.multiple1}) and (\ref{eq.multiple2}) respectively. 
In particular, for $m=2$ and $l=1$, we obtain the generating function of partitions with odd rises.
\begin{equation*}
\Pa_{\mathrm{odd}}(z)=1+\sum_{k=1}^\infty \frac{q^{\binom{k+1}{2}}}{(1-q^k)(q^2;q^2)}z^k.
\end{equation*} 	
\end{example}

Observe that Theorem \ref{teo.brnchlespart} has a dual version in terms of compositions. The language $\Pa_S$ is linked, having as set of links $B_S=\{(i,j)|j-i\in S\}\subseteq \N_+\times\N_+$. Its dual language is that of compositions $\kap$ such that $k_{i+1}-k_i\notin S$. More precisely, denoting by $\widehat{S}$ the complement of $S$ in $\Z$, and by $\C^{\widehat{S}}$  the language of compositions $\kap$ such that $k_{i+1}-k_i\in \widehat {S}$, $i=1,2,\dots,\ell(\kap)-1$, we have 
\begin{equation}\label{eq.dualityparticom}\Pa_S^!=\C^{\widehat{S}}.
\end{equation}
For example, $\Pa_{\mathrm{odd}}^!$ is the language of compositions such that $k_{i+1}-k_i$ is either even and non negative, or negative.
 Using Eq. (\ref{eq.dualityparticom}), and Proposition \ref{prop.kdual1}, from Theorem \ref{teo.brnchlespart} we get 
\begin{corollary}\label{cor.compositionsSc} The $q$-series of $\C^{\widehat{S}}$ is given by
	\begin{equation}\label{eq.marvel1}
	\C^{\widehat{S}}(z)=\left(1+\sum_{k=1}^{\infty}(-1)^k\frac{q^{\binom{k+1}{2}}}{(1-q^k)}(\Se_{k-1})!(q)z^k\right)^{-1}.
	\end{equation} 
\end{corollary}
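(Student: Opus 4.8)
The plan is to combine the K-duality relation $\Pa_S^!=\C^{\widehat{S}}$ recorded in Eq. (\ref{eq.dualityparticom}) with the inversion formula of Proposition \ref{prop.kdual1} and the explicit $q$-series for $\Pa_S$ furnished by Theorem \ref{teo.brnchlespart}. First I would invoke Proposition \ref{prop.kdual1}, which gives $\C^{\widehat{S}}=\Pa_S^!=(\Pa_S^g)^{-1}$ at the level of noncommutative series, where $\Pa_S^g=\sum_{\kap\in\Pa_S}(-1)^{\ell(\kap)}X_{\kap}$ is the graded generating function. It therefore suffices to compute the umbral image of $(\Pa_S^g)^{-1}$ and identify it with the right-hand side of Eq. (\ref{eq.marvel1}).

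The crux is to understand how the graded sign $(-1)^{\ell(\kap)}$ behaves under umbralization. Since the umbral map sends $\sum_{\kap}\la S,X_\kap\ran X_\kap$ to $\sum_{k}\bigl(\sum_{\ell(\kap)=k}\la S,X_\kap\ran q^{|\kap|}\bigr)z^k$, the power of $z$ records exactly the length $\ell(\kap)$. Consequently the factor $(-1)^{\ell(\kap)}$ becomes $(-1)^k$ on the coefficient of $z^k$, which is to say $\Pa_S^g(z)=\Pa_S(-z)$. Because the composite $\um\circ\ab$ is an algebra homomorphism carrying the noncommutative product to the ordinary product of $q$-series, and since $\Pa_S^g$ has constant term $1$ and is thus invertible, the map commutes with series inversion. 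Hence $\C^{\widehat{S}}(z)=\bigl(\Pa_S^g(z)\bigr)^{-1}=\bigl(\Pa_S(-z)\bigr)^{-1}$.

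Finally I would substitute formula (\ref{eq.marvel}) of Theorem \ref{teo.brnchlespart}, namely $\Pa_S(z)=1+\sum_{k=1}^{\infty}\frac{q^{\binom{k+1}{2}}}{1-q^k}(\Se_{k-1})!(q)z^k$, with $z$ replaced by $-z$. This introduces precisely the alternating factor $(-1)^k$ in each summand and produces exactly Eq. (\ref{eq.marvel1}). The only point demanding any care—and it is a mild one rather than a genuine obstacle—is the identity $\Pa_S^g(z)=\Pa_S(-z)$ together with the compatibility of the umbral map with multiplicative inversion; once these bookkeeping facts are in place the corollary follows by direct substitution.
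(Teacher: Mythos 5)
Your proposal is correct and follows essentially the same route the paper intends: combining the duality $\Pa_S^!=\C^{\widehat{S}}$ of Eq.~(\ref{eq.dualityparticom}) with the inversion formula of Proposition~\ref{prop.kdual1} and the $q$-series of Theorem~\ref{teo.brnchlespart}. The paper states the corollary without writing out the details, and your bookkeeping observations (that $(-1)^{\ell(\kap)}$ becomes $(-1)^k$ under umbralization, so $\Pa_S^g(z)=\Pa_S(-z)$, and that the umbral algebra map commutes with multiplicative inversion) correctly fill in exactly the steps being elided.
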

\section{Appendix}
\subsection{Proof of Proposition \ref{prop.implicit}}\label{sec.appendix}
\begin{proof}Before giving the details of the proof, we begin with an example. Let us assume that $F(X;Y)$ is a language, and that for example $\boldsymbol{\tau}=X_2\textcolor{red}{Y_2}X_0\textcolor{red}{Y_0}$ is a word in $F(X;Y)$. Then, the substitution of $G^{(1)}(X)=F(X;0)$ into $\boldsymbol{\tau}$ is equal to $X_2\sh^2G^{(1)}(X)X_0G^{(1)}(X)$. If the words $X_0X_3$ and $X_1X_2$ are in the language $G^{(1)}$, then $X_2X_2X_5X_0X_1X_2$ is in $G^{(2)}$. This can be represented as a tree with height two and two kinds of colored edges. Colors of the edges are two kind of integers in $\N$, `red or black' depending on the letter of the word being in $\X$ or in $\Y$. Leaves and edges pointing to leaves are colored only with `black numbers'.  Here, the {\em  height of a tree} is defined to be maximun of the heights of its leaves, the {\em  height of a leaf} being the number of internal vertices in the path from the root. Reading from left to right the letters corresponding to the (black) leaves  we obtain {\em  the word $\Omega(T)$} in the support of $G^{(2)}=F(X;F(X;0))$. As a matter of fact, the series $G^{(2)}$ is obtained by adding the words associated to these kind of trees, with height at most $2$ (see Fig. \ref{fig.Gtree1}). Those of height $1$ are identified with words in $G^{(1)}(X)=F(X;0)=F(X,F(0;0))$, since $F(0,0)=0$. It is not difficult to prove  by induction  that the series $G^{(n)}$ is obtained  by adding the words associated to trees with height at most $n$, enriched with words in $F(X;Y)$ (trees with bi-colored edges). Then we have
\begin{equation}\label{eq.implcittrees}G^{(n)}=\sum_{T:\,\mathrm{height}(T)\leq n}\Omega(T).
\end{equation}
With this combinatorial representation of $G^{(n)}$ in mind we can now begin the proof of Proposition \ref{prop.implicit}. We assume, without loss of generality, that $F(X,Y)$ is a language. First we have to prove that $G^{(n)}$ is convergent. Given a word $\boldsymbol{\tau}$ and a tree such that $\Omega(T)=X_{\boldsymbol{\tau}}$, we claim that 
{\em for every component $\tau_i$ of $\boldsymbol{\tau}$, the height of the leaf $v_i$ of $T$ colored  $\tau_i$, is upper bounded by $\tau_i+\ell$, $\ell=\ell(\boldsymbol{\tau})$}. Proof of the claim: let $P_i$ be the path from the root to $v_i$ and by $p_i$ the father of $v_i$, the last internal vertex of $P_i$. Denote by $I_1$ the set of internal vertices in $P_i$ different from $p_i$ and having only one child, and by $I_2$ the set having the rest of internal vertices in $P_i$. Observe that all the edges in $P_i$ are colored red, except the last one, connecting $p_i$ with $v_i$.
Since $\la F(X;Y),Y_0\ran =0$, an edge connecting a vertex in $I_1$ with its child have to be colored \textcolor{red}{red  $k$}, for some $k\geq 1$. It means a shifting of at least one for each of these $r:=|I_1|$ internal vertices, and all these shifts have necessarily to add up at most $\tau_i$. Hence $r\leq \tau_i$. For each vertex $v$ in $I_2$ there is at least one path $P_v$ from $v$ to a leaf in $T$. For $v=p_i$ the path is defined as $\{p_i,v_i\}$. If $v\neq p_i$ it is obtained by choosing a child of $v$ not in $P_i$ (this is because $v$ has at least two children), and then any path going trough this child to a leaf. For $v\neq v'$, both in $I_2$, the leaf in $P_v$ is different from the leaf in $P_{v'}$. Otherwise $T$ would have a cycle, because $v$ and $v'$ are connected trough $P_i$. Since $T$ has a total of $\ell$ leaves, $s:=|I_2|\leq\ell$. Since the height of $v_i$ is equal to $r+s$, and $r+s\leq \tau_i+\ell$, we have proved the claim. 

Once we have proved the claim we have that if $\Omega(T)=X_{\tau}$, then the height of $T$ is upper bounded by $m=\mathrm{max}\{\tau_i+\ell|i=1,2,\dots,\ell\}=\mathrm{max}\{\tau_i|i=1,2,\dots,\ell\}+\ell$. Then,  since $\la \Omega(T), X_{\tau}\ran=0 $ if $\mathrm{heitght}(T)>m$, by Eq. (\ref{eq.implcittrees}), \begin{equation*}\la G^{(n)}, X_{\tau}\ran=\sum_{T:\,\mathrm{height}(T)\leq n}\la \Omega(T), X_{\tau}\ran=\sum_{T:\,\mathrm{height}(T)\leq m}\la \Omega(T), X_{\tau}\ran=\la G^{(m)}, X_{\tau}\ran.\end{equation*}
\begin{figure}[hbt!]\label{fig.Gtree1}
	\begin{center}\includegraphics[width=8cm]{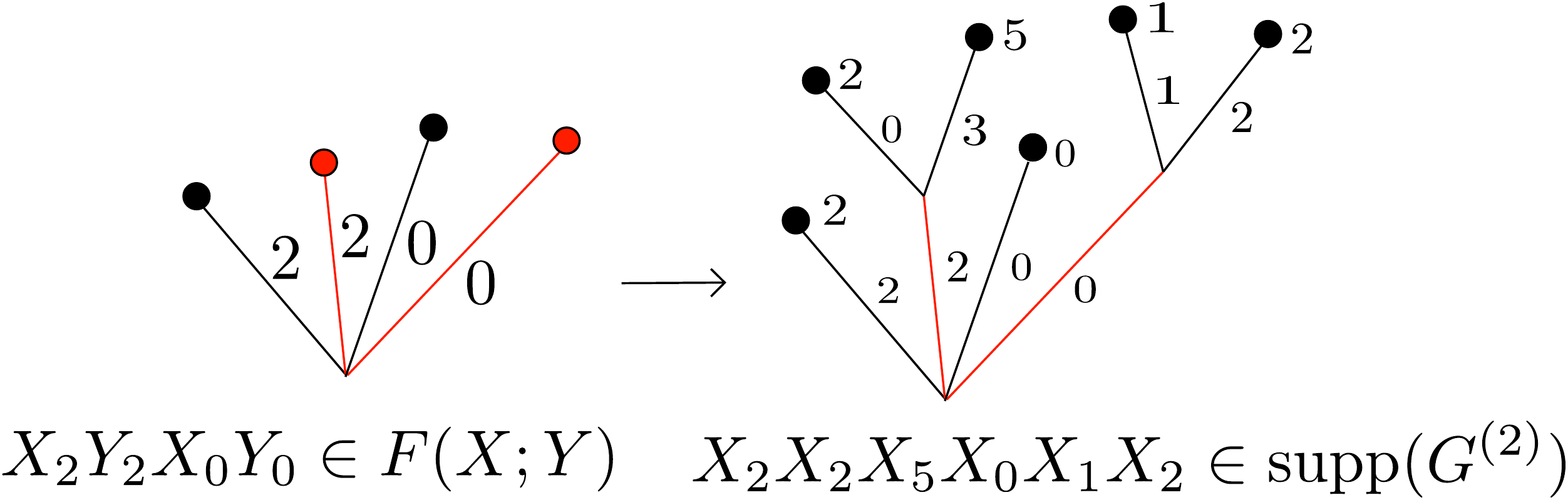}
		\caption{Combinatorial representation of $G^{(2)}=F(X;G^{(1)})$.}
	\end{center}
\end{figure}
Hence, for every $\boldsymbol{\tau}$, the sequence $\la G^{(n)}, X_{\boldsymbol{\tau}}\ran$ is stationary, $G^{(n)}$ converges, and $G=\lim_{n\rightarrow \infty}G^{(n)}$ is a solution of the shift-plethystic implicit equation. To prove unicity we have to introduce some notation. Let $R(X)$ and $S(X)$ be two series with zero constant term. We say the $R=_n S$ if for every word $X_{\boldsymbol{\tau}}$ with $\mathrm{max}\{\tau_i|i=1,2,\dots,\ell(\boldsymbol{\tau})\}+\ell(\boldsymbol{\tau})\leq n$ we have $\la R,X_{\boldsymbol{\tau}}\ran=\la S,X_{\boldsymbol{\tau}}\ran$. Unicity is obtained from the easy implication
$$R=_n S\Rightarrow F(X;R(X))=_{n+1}F(X;S(X))$$
and the fact that if $H$ is another solution, then $H=_0 G$ (because $H(0)=G(0)=0$).\end{proof}
%\subsection{Some expansions}
%Expanding some terms of the commutative series  $\msA^{(3)}_{\Pib_2}(\x,z)$, $z$ as indicator of the number of vertices, we get  
%\begin{equation*}\begin{split}\msA^{(3)}_{\Pib_{2}}(\x,z)&=\frac{x_0z}{\left(1-\frac{x_1 z}{\left(1-\frac{x_2 z}{\left(1-x_3 z\right) \left(1-x_4 z\right)}\right) \left(1-\frac{x_3 z}{\left(1-x_4 z\right) \left(1-x_5
%			z\right)}\right)}\right) \left(1-\frac{x_2 z}{\left(1-\frac{x_3 z}{\left(1-x_4 z\right) \left(1-x_5 z\right)}\right) \left(1-\frac{x_4 z}{\left(1-x_5 z\right) \left(1-x_6
%			z\right)}\right)}\right)}\\&=x_0z+x_0 \left(x_1+x_2\right) z^2+x_0 \left(x_1^2+2 x_2 x_1+x_3 x_1+x_2^2+x_2 x_3+x_2 x_4\right) z^3\\&+x_0(x_1^3+3 x_2 x_1^2+2 x_3 x_1^2+3 x_2^2 x_1+x_3^2 x_1+4 x_2 x_3 x_1+2
%x_2 x_4 x_1+x_3 x_4 x_1+x_3 x_5 x_1\\&+x_2^3+x_2 x_3^2+x_2 x_4^2+2 %x_2^2 x_3+2 x_2^2 x_4+2 x_2 x_3 x_4+x_2 x_3 x_5+x_2 x_4 x_5+x_2 x_4 x_6) z^4+\dots.
%\end{split}\end{equation*}

\clearpage
\subsection{Table with notation}\label{table.notation}
\begin{table}[h!]
	\begin{center}
		\caption{Table of symbols for relevant series.}
		\label{tab:table1}
		\begin{tabular}{lcl}
			\toprule % <-- Toprule here
			\textbf{Symbol} & \textbf{NC Series} & \textbf{Combinatorial meaning}\\\midrule\midrule
			\textcolor{magenta} {$\msA_{M}$}&$\msA_M=X_0(M\spl \msA_M$)& \textcolor{magenta}{Shift-plethystic trees enriched with $M$}.\\\midrule
			$\msA_{\Pib_m}$	& ----- & Shift-plethystic trees enriched with partitions in $\Pib_m$.\\\midrule
		$\msA_{(1+\Al_S)}$	& ----- &Branchless shift-plethystic trees with rises in $S$.\\\midrule
					 \textcolor{magenta}{$\C$}&$\frac{1}{1-\Al_1}=\frac{1}{1-\sum_{k=1}^{\infty}X_k}$&\textcolor{magenta} {Compositions}.\\\midrule 
			 $\mathrm{C}$&$\frac{1}{1-\sum_{i=1}^\infty\frac{X_i}{1+X_i}}.$& Carlitz compositions (no contiguous repeated letters).\\\midrule 
			$\C^S$&-----&Compositions with differences in $S$, $k_{i+1}-k_i\in S$.\\\midrule
			$\C^{(m)}$&-----&Compositions with differences  $k_{i+1}-k_i\leq m$.\\\midrule 
		\textcolor{magenta}{$\Pib$}	&-----&\textcolor{magenta}{Partitions with repetitions, in decreasing order}.\\\midrule
			$\Pib_m$&$\prod_{k=m}^1\frac{1}{1-X_k}$ &  Longest part $\leq m$.\\\midrule
			$\Pib_{\infty}$ & $\prod_{k=\infty}^1\frac{1}{1-X_k}$ &Unbounded size. \\\midrule\textcolor{magenta}{$\Pi$}&-----&\textcolor{magenta}{Distinct partitions, in increasing order}.\\\midrule 
			 $\Pi^m$&$\prod_{k=1}^m(1+X_k)$&Longest part $\leq m$.\\\midrule 
			$\Pi^{\infty}$&$\prod_{k=1}^{\infty}(1+X_k)$&Unbounded size.\\\midrule\textcolor{magenta}{$\Pa$}&-----&\textcolor{magenta}{Partitions, according with their rises}.\\\midrule
			$\Pa_S$&-----&With risings $\lam_{i+1}-\lam_i\in S$.\\\midrule 
			$\Pa_m$&-----& With risings $\lam_{i+1}-\lam_i\geq m$ ($m$-distinct).\\\midrule
			\textcolor{magenta}{$\Al$}&-----&\textcolor{magenta}{Alphabet $\subseteq\N$}\\\midrule
			$\Al_S$ & $\sum_{k\in S}X_k$ & -----\\\midrule % <-- Midrule here
			$\Al_m$ & $\sum_{k=m}^{\infty}X_k$& -----\\ 
			\bottomrule % <-- Bottomrule here
		\end{tabular}
	\end{center}
\end{table}

%Zona de los archivos adjuntos
\bibliographystyle{amsplain}.
\bibliography{bibliodehydra}

\providecommand{\bysame}{\leavevmode\hbox to3em{\hrulefill}\thinspace}
\providecommand{\MR}{\relax\ifhmode\unskip\space\fi MR }
% \MRhref is called by the amsart/book/proc definition of \MR.
\providecommand{\MRhref}[2]{%
  \href{http://www.ams.org/mathscinet-getitem?mr=#1}{#2}
}
\providecommand{\href}[2]{#2}
\begin{thebibliography}{10}

\bibitem{Andrews2004}
George~E Andrews and Kimmo Eriksson, \emph{Integer partitions}, Cambridge
  University Press, 2004.

\bibitem{Bergeron1998}
Fran{\c{c}}ois Bergeron, Gilbert Labelle, and Pierre Leroux,
  \emph{Combinatorial species and tree-like structures}, vol.~67, Cambridge
  University Press, 1998.

\bibitem{Carlitz1976}
Leonard Carlitz, \emph{Restricted compositions}, Fibonacci Quart \textbf{14}
  (1976), no.~3, 254--264.

\bibitem{Eilenberg1974}
Samuel Eilenberg, \emph{Automata, languages, and machines}, Academic press,
  1974.

\bibitem{Ekhad2019}
Shalosh~B Ekhad and Doron Zeilberger, \emph{Dh {L}ehmer's tridiagonal
  determinant: An etude in (andrews-inspired) experimental mathematics}, Annals
  of Combinatorics \textbf{23} (2019), no.~3-4, 717--724.

\bibitem{Flajolet1980}
Philippe Flajolet, \emph{Combinatorial aspects of continued fractions},
  Discrete Mathematics \textbf{32} (1980), no.~2, 125--161.

\bibitem{garsia1981qLag}
Adriano~M Garsia, \emph{A q-analogue of the \mbox{L}agrange inversion formula},
  Houston J. Math \textbf{7} (1981), no.~2, 205--237.

\bibitem{Gelfand2005}
Israel Gelfand, Sergei Gelfand, Vladimir Retakh, and Robert~Lee Wilson,
  \emph{Quasideterminants}, Advances in Mathematics \textbf{193} (2005), no.~1,
  56--141.

\bibitem{gel1992theory}
Israel~M Gel'fand and Vladimir~S Retakh, \emph{A theory of noncommutative
  determinants and characteristic functions of graphs}, Functional Analysis and
  Its Applications \textbf{26} (1992), no.~4, 231--246.

\bibitem{Gesselthesis}
Ira Gessel, \emph{Generating functions and the enumeration of sequences}, Ph.D.
  thesis, Massachusetts Institute of Technology, Dept. of Mathematics, 1977.

\bibitem{Heubach2009}
Silvia Heubach and Toufik Mansour, \emph{Combinatorics of compositions and
  words}, CRC Press, 2009.

\bibitem{joyal1981theorie}
Andr{\'e} Joyal, \emph{Une th{\'e}orie combinatoire des s{\'e}ries formelles},
  Advances in mathematics \textbf{42} (1981), no.~1, 1--82.

\bibitem{lehmer1946two}
Derrick~H Lehmer, \emph{Two nonexistence theorems on partitions}, Bulletin of
  the American Mathematical Society \textbf{52} (1946), no.~6, 538--544.

\bibitem{MacMahonbook}
Percy~Alexander MacMahon, \emph{Combinatory analysis, vol. ii}, Cambridge
  University Press, Reprinted by the American Mathematical Society, 2001.,
  1918.

\bibitem{MendezJuly2020}
Miguel M{\'e}ndez, \emph{Shift-plethystic trees and {R}ogers-{R}amanujan
  identities}, Ramanujan J, https://doi.org/10.1007/s11139-020-00285-8 (July
  2020).

\bibitem{Mendezava}
Miguel M{\'e}ndez and Oscar Nava, \emph{Colored species, c-monoids, and
  plethysm, \mbox{I.}}, J. Comb. Theory Ser. A \textbf{64} (1993), no.~1,
  102--129.

\bibitem{Pak1995}
Igor Pak, Alexander Postnikov, and Vladimir Retakh, \emph{Noncommutative
  {L}agrange theorem and inversion polynomials}, Proc. FPSAC, vol.~95, 1995.

\bibitem{polishchuk2005quadratic}
Alexander Polishchuk and Leonid Positselski, \emph{Quadratic algebras},
  vol.~37, American Mathematical Soc., 2005.

\bibitem{Polya1937}
George P{\'o}lya, \emph{Kombinatorische anzahlbestimmungen f{\"u}r gruppen,
  graphen und chemische verbindungen}, Acta mathematica \textbf{68} (1937),
  no.~1, 145--254.

\bibitem{priddy1970koszul}
Stewart~B Priddy, \emph{Koszul resolutions}, Transactions of the American
  Mathematical Society \textbf{152} (1970), no.~1, 39--60.

\bibitem{Rogers1893}
Leonard~J Rogers, \emph{Second memoir on the expansion of certain infinite
  products}, Proceedings of the London Mathematical Society \textbf{1} (1893),
  no.~1, 318--343.

\bibitem{Schur1917}
Issai Schur, \emph{Ein beitrag zur additiven zahlentheorie und zur theorie der
  kettenbr{\"u}che}, S. B. Preuss. Akad. Wiss. Phys. Math. Klasse (1917),
  302--321.

\bibitem{sills2017invitation}
Andrew~V Sills, \emph{An invitation to the \mbox{R}ogers-\mbox{R}amanujan
  identities}, Chapman and Hall/CRC, 2017.

\bibitem{Stanley1999}
Richard~P Stanley, \emph{Enumerative combinatorics}, Cambridge Studies in
  Advanced Mathematics, vol.~2, Cambridge University Press, 1999.

\bibitem{Wedderburn1913}
Joseph Henry~Maclagan Wedderburn, \emph{On continued fractions in
  non-commutative quantities}, The Annals of Mathematics \textbf{15} (1913),
  no.~1/4, 101--105.

\end{thebibliography}
%\printbibliography
\end{document}